\numberwithin{equation}{section} 
\newtheorem{thm}{Theorem}[section]
\newtheorem{lem}[thm]{Lemma}
\theoremstyle{definition}
\newtheorem{remark}[thm]{Remark}
\newcommand{\ep}{\varepsilon}
\newcommand{\f}[2]{\frac{#1}{#2}}
\newcommand{\na}{\nabla}
\newcommand{\Om}{\Omega}
\newcommand{\lp}[2]{\|#2\|_{L^{#1}(\Omega)}}
\newcommand{\lpb}[2]{\big\|#2\big\|_{L^{#1}(\Omega)}}
\newcommand{\ol}{\overline}
\newcommand{\cd}{(\cdot,t)}
\newcommand{\io}{\int_{\Omega}}
\newcommand{\ubar}{\overline{u}}
\newcommand{\vstar}{v_*}
\newcommand{\mass}{M}
\newcommand{\wasM}{Z}
\newcommand{\etatilde}{\widetilde{\eta}}
\begin{document}
\begin{center}
    \LARGE{{\bf 
 Stabilization in the Keller--Segel system \\with signal-dependent sensitivity}}
\end{center}
\vspace{5pt}
\begin{center}
    Tobias Black\\
    \vspace{2pt}
    Universit\"at Paderborn, 
    Institut f\"ur Mathematik,\\ 
    Warburger Str.\ 100, 33098 Paderborn, Germany\\
    {\tt tblack@math.uni-paderborn.de}\\
    \vspace{12pt} 
    Johannes Lankeit\\
    \vspace{2pt}
    Universit\"at Paderborn, 
    Institut f\"ur Mathematik,\\ 
    Warburger Str.\ 100, 33098 Paderborn, Germany\\
    {\tt jlankeit@math.uni-paderborn.de}\\
    \vspace{12pt}
    Masaaki Mizukami\\
    \vspace{2pt}
    Department of Mathematics, 
    Tokyo University of Science\\
    1-3, Kagurazaka, Shinjuku-ku, Tokyo 162-8601, Japan\\
    {\tt masaaki.mizukami.math@gmail.com}\\
    \vspace{2pt}
\end{center}
\begin{center}    
    \small \today
\end{center}

\vspace{2pt}
\newenvironment{summary}
{\vspace{.5\baselineskip}\begin{list}{}{%
     \setlength{\baselineskip}{0.85\baselineskip}
     \setlength{\topsep}{0pt}
     \setlength{\leftmargin}{12mm}
     \setlength{\rightmargin}{12mm}
     \setlength{\listparindent}{0mm}
     \setlength{\itemindent}{\listparindent}
     \setlength{\parsep}{0pt}
     \item\relax}}{\end{list}\vspace{.5\baselineskip}}
\begin{summary}
{{\bf Abstract.}
This paper deals with the Keller--Segel system with signal-dependent sensitivity 
\begin{align*}
 &u_t =  \Delta u - \chi \na \cdot (uS(v)\na v), 
\\
 &v_t =  \Delta v - v + u,
\end{align*}
where $\chi>0$ and $S$ is a given function generalizing the sensitivity $S(s)=\frac{1}{(a+s)^{k}}$, $k>1$, $a\ge 0$,   
and shows exponential convergence of global classical solutions under an additional smallness condition condition for $\chi>0$. 
\vspace*{1cm}\\
\textbf{MSC (2010):} 35B40 (primary); 35K51; 92C17; 35Q92 (secondary)\\
\textbf{Key words:} chemotaxis system; signal-dependent sensitivity; asymptotic behaviour. 
}
\end{summary}
\vspace{10pt}

\newpage
%
%
\section{Introduction}

\subsection{Long-term behaviour in chemotaxis models}
Although mainly known for admitting solutions that blow up, the class of chemotaxis models also encompasses a large variety of systems whose solutions are global and bounded. In these situations the question of long-term behaviour of the solutions becomes significant. However, even in the most prototypical situation, 
\begin{equation}\label{KS}
 \begin{cases}
  u_t=Δu-χ\nabla\cdot(uS(v)\nabla v),\\
  v_t = Δv-v+u,
 \end{cases}
\end{equation}
with $S(v)\equiv1$ (and $χ>0$ so small that solutions in bounded two-dimensional domains are global), the answer to this question is not straightforward: Whereas each of the solutions converges to a stationary state, \cite{Feireisletal}, the set of those steady state solutions is rather non-trivial, \cite{biler,senba_suzuki,schaaf}. 
If $S(v)$ takes a different form---prototypical choices being $S(v)=\f1v$ or $S(v)=\f1{(1+v)^k}$, see also \cite[Sec. 2.2]{hillenpainter_survey}---, the system \eqref{KS} even loses the energy structure on which the proof of \cite{Feireisletal} is based. 

In these situations, even global existence is only known under additional restrictions: If $S(v)=\f1v$, a smallness condition on $χ$ (\cite{Biler99, Winkler_2011, Fujie_2015,Johannes_2016}), vastly different diffusion speeds (\cite{Fujie-Senba_2016,Fujie-Senba_2018}) or pursuance of a weaker solution concept (\cite{Winkler_2011,Stinner-Winkler_2011,Lankeit-Winkler_2017,zhigun}) have been needed for corresponding proofs. 

Nevertheless, recently, posing even stricter smallness conditions on $χ$, Winkler and Yokota in \cite{Winkler-Yokota} obtained global asymptotic stability of the homogeneous state $(\ubar_0,\ubar_0)$, thus highlighting the strong qualitative differences between the classical Keller--Segel model (where large perturbations of the stationary state usually lead to blow-up in finite time, see e.g. \cite{Winkler_2013_blowup}) and chemotaxis systems with logarithmic sensitivity ($S(v)\nabla v= \nabla \log v$). 

For similar sensitivities, global existence has been assured in the radial setting and if the chemical diffuses fast (\cite{Fujie-Senba_2016}), or, alternatively, whenever 
\[
\chi < k (a+\eta)^{k-1}\sqrt{\frac{2}{n}},
\]
if $S(v)\le \f1{(a+v)^k}$ with some $k>1$, $a\ge 0$, \cite{Mizukami-Yokota_02}, where $η>0$ has the form discussed below.

It is the latter case that we want to examine in regards to its asymptotic behaviour in the present paper. This goal compels us to revisit the boundedness proof of \cite{Mizukami-Yokota_02}, since now more quantitative information becomes necessary. Said bounds at hand, we can then begin adapting the reasoning of \cite{Winkler-Yokota} about the large-time asymptotics. 

In contrast to the situation there, not only smallness of the chemotaxis coefficient, but also, remarkably, largeness of the initial mass $\io u_0$ lead to eventual equilibriation. 
\medskip 

\subsection{Main results}
In order to state the results, let us first introduce the precise setting: 
We investigate the chemotaxis system with signal-dependent sensitivity 
 \begin{align}\label{cp}
     \begin{cases}
         u_t = \Delta u - \chi \na \cdot \big(uS(v)\na v\big)
         &\text{in } \Omega\times(0,T),   
 \\[1mm]
         v_t = \Delta v - v + u
         &\text{in } \Omega\times(0,T),  
 \\[1mm] 
        \partial_\nu u =
        \partial_\nu v = 0 
        & \text{in }\partial\Omega\times(0,T),
 \\[1mm] 
        u(\cdot,0)=u_0,\ v(\cdot,0)=v_0 
        &  \text{in } \Omega, 
    \end{cases}
 \end{align}
where $\Omega\subset \mathbb{R}^n$ ($n\ge 2$) is a bounded domain 
with smooth boundary $\partial \Omega$, $\partial_\nu$ denotes differentiation with respect to the outward normal of $\partial \Omega$, $\chi>0$ is a constant, $S$ is a given function and $u_0,v_0$ are also given initial data satisfying  

\begin{align}\label{condi;ini} 
0\le u_0 \in C^0(\ol{\Omega})\setminus \{0\}   
\quad \mbox{and} \quad 
 0\le v_0 \in W^{1,\infty}(\Omega)\setminus \{0\}. 
\end{align}

The main result of this article will then be given by: 

\begin{thm}\label{mainthm}
For $n \ge 2$ let $\Omega\subset \mathbb{R}^n$ 
 be a bounded domain with smooth boundary. 
Let $k>1$, $M>0$ and $v_\star>0$, $a\ge 0$. 
Then there is $δ=δ(a,k,M,v_\star)>0$ such that for all functions  
\begin{align}\label{condi;S} 
S\in C^{1+\theta}((0,\infty)) \text{ for some } \theta\in(0,1), \qquad \text{with}\qquad 
 0 \le S(s) \le \frac{1}{(a+s)^k}, 
 \end{align}
all initial data $u_0, v_0$ as in \eqref{condi;ini} and satisfying $\io u_0=M$ and $\min v_0=v_\star$ and for all $χ<δ$, the problem \eqref{cp} has a global classical solution 
\begin{equation}
\begin{aligned}\label{th;class}
&u\in C^{2,1}\big(\ol{\Omega}\times(0,\infty)\big)\cap C^0\big(\ol{\Omega}\times [0,\infty)\big), 
\\ 
&v\in C^{2,1}\big(\ol{\Omega}\times(0,\infty)\big)\cap C^0\big(\ol{\Omega}\times [0,\infty)\big)\cap L^\infty \big((0,\infty);W^{1,\infty}(\Omega)\big)
\end{aligned}
\end{equation}
and for this solution one can find $\kappa>0$ and $C>0$ such that 
\begin{equation}\label{result:convergenceestimate}
  \lp{\infty}{u\cd -\ol{u_0}}+ \lp{\infty}{v\cd -\ol{u_0}}\le Ce^{-\kappa t} 
\quad \mbox{for all }t>0. 
\end{equation}
\end{thm}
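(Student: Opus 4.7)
The argument naturally splits into two parts: first, global existence together with uniform-in-time bounds on $\lp{\infty}{u\cd}$ and $\lp{\infty}{\na v\cd}$ and a strictly positive lower bound $v\ge\eta>0$; and second, exploitation of these bounds in an energy method to obtain the exponential decay \eqref{result:convergenceestimate}.

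For the first part I would revisit the boundedness proof in \cite{Mizukami-Yokota_02} and make it quantitative. After the usual approximation step, which provides local classical solutions with strictly positive $v$, the key dissipation estimate is the one for $\io u^p(a+v)^{-q}$ with a pair $(p,q)$, $p-1<q<p$, adapted to $k$ and $\chi$; once $\chi$ stays below an explicit threshold in terms of $\eta$, $k$, $a$ and $p$, the chemotactic cross-term is absorbed by the diffusion of $u$ and $v$ and a time-independent $L^p$-bound on $u$ follows. The new ingredient compared to \cite{Mizukami-Yokota_02} is a quantitative time-independent lower bound $v(\cdot,t)\ge\eta=\eta(a,k,M,v_\star)>0$, which I would derive from the Duhamel formula
\[
v(\cdot,t)=e^{-t}e^{t\Delta}v_0+\int_0^t e^{-(t-s)}e^{(t-s)\Delta}u(\cdot,s)\,\mathrm{d}s,
\]
together with pointwise lower bounds for the Neumann heat kernel and mass conservation $\io u\equiv M$: the first summand handles small times via $v_\star$, and the second delivers the long-time bound in terms of $M/|\Omega|$. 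Inserting $\eta$ back into the $L^p$-estimate and running a standard semigroup bootstrap then yields uniform $L^\infty$-bounds on $u$ and $\na v$ and global classical existence with the regularity \eqref{th;class}.

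For the second part I set $\psi:=u-\ol{u_0}$ and $\phi:=v-\ol v(t)$ with $\ol v(t):=\tfrac{1}{|\Omega|}\io v(\cdot,t)$. Integration of the second equation of \eqref{cp} yields $\ol v(t)=\ol{u_0}+(\ol v(0)-\ol{u_0})e^{-t}$, so $\io\psi=\io\phi=0$ and $\phi_t=\Delta\phi-\phi+\psi$. Testing the $u$-equation against $\psi$, noting $\na v=\na\phi$, and bounding $uS(v)\le K:=\lp{\infty}{u}/(a+\eta)^k$ via Young's inequality gives
\[
\tfrac{d}{dt}\io\psi^2+\io|\na\psi|^2\le \chi^2K^2\io|\na\phi|^2,
\]
while multiplying $\phi_t=\Delta\phi-\phi+\psi$ by $-\Delta\phi$ produces $\tfrac{d}{dt}\io|\na\phi|^2+2\io|\na\phi|^2\le\io\psi^2$. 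Adding these with a weight $\beta\in(0,\tfrac{1}{2c_P}]$, where $c_P$ is the Poincar\'e constant of zero-mean functions on $\Omega$, and requiring $\chi^2K^2\le\beta$ --- i.e.\ $\chi<\delta(a,k,M,v_\star):=(K\sqrt{2c_P})^{-1}$ --- yields the Gronwall-type inequality
\[
\tfrac{d}{dt}\!\Big(\io\psi^2+\beta\io|\na\phi|^2\Big)+\kappa\!\Big(\io\psi^2+\beta\io|\na\phi|^2\Big)\le 0
\]
for some $\kappa>0$. Exponential $L^2$-decay of $u-\ol{u_0}$ and of $\na v$ follows, and together with the exponential decay of $\ol v-\ol{u_0}$ this gives the $L^2$-form of \eqref{result:convergenceestimate}. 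The final $L^\infty$-upgrade uses Gagliardo--Nirenberg interpolation against the uniform bounds on $u$ and $\na v$ from the first step.

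The main technical obstacle is the interlocked data dependence of the three constants entering $\delta(a,k,M,v_\star)$: the lower bound $\eta$ depends on $v_\star$ and, through the heat-semigroup representation, on $M$; the uniform $L^\infty$-bound for $u$ depends on $\eta$ (via the functional $\io u^p(a+v)^{-q}$) and on $k$ and $a$; and both in turn determine the admissible size of $\chi$. Making \cite{Mizukami-Yokota_02} quantitative enough to disentangle these dependencies --- where only qualitative boundedness was required there --- is the essential extra effort needed to close the convergence argument.
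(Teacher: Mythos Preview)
Your overall plan matches the paper's, and the energy step (testing the $\phi$-equation by $-\Delta\phi$ rather than by $v-\overline{u_0}$ as the paper does) is a correct variant. But there is one genuine gap in how you obtain $\delta=\delta(a,k,M,v_\star)$.

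You define $\delta=(K\sqrt{2c_P})^{-1}$ with $K=\|u\|_{L^\infty(\Omega\times(0,\infty))}/(a+\eta)^k$, and assert that the uniform $L^\infty$ bound on $u$ produced by the functional $\io u^p(a+v)^{-q}$ depends only on $\eta,k,a$. It does not: any bound valid for \emph{all} $t>0$ obtained from that differential inequality also depends on the initial value $\io u_0^p(a+v_0)^{-q}$, hence on $\|u_0\|_{L^p}$ and not merely on $M=\io u_0$. With your definition, $\delta$ would therefore depend on more than the theorem permits. The paper's remedy is to drop uniform bounds in favour of \emph{asymptotic} ones: the differential inequality for $\io u^p\varphi(v)$ has superlinear absorption (an ODE of the form $y'\le -c\,y^{1/b}+C$ with $1/b>1$), so $\limsup_{t\to\infty}\io u^p\varphi(v)$ is bounded by a quantity depending only on $M$, regardless of the initial size of $u_0$. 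This is bootstrapped to $\limsup_{t\to\infty}\|uS(v)\|_{L^\infty}\le K_5 M/(a+K_1M)^k$, and the Lyapunov inequality is then asserted only for $t>t^\ast$ with a solution-dependent $t^\ast$ --- harmless, since $C$ and $\kappa$ in \eqref{result:convergenceestimate} are allowed to depend on the particular solution. Your closing paragraph correctly flags the data-dependence issue, but the cure is not to sharpen the uniform bounds; it is to replace them by eventual ones. (The paper likewise uses the eventual lower bound $\liminf_{t\to\infty}\inf_\Omega v\ge 2K_1M$ rather than your uniform $\eta(M,v_\star)$; for Theorem~\ref{mainthm} this is a matter of taste, but the $M$-only dependence is what drives the separate large-mass Theorem~\ref{secondthm}.)
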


\begin{remark}\label{rem;deltaindependence}
In the case that $a>0$, $δ$ can be chosen independently of $M$ and $v_\star$.
\end{remark}

The strategy for the proof of this result lies in identifying the functional 
\[
 {\cal{F}}(u,v)(t) := \io (u\cd -\ol{u_0})^2 + K\io (v\cd -\ol{u_0})^2
\]
(for some $K>0$) as eventual Lyapunov functional (cf. \cite{Winkler-Yokota}). Apparent estimates for its dissipation rate show $L^2$-convergence, which then by means of boundedness information for the solutions can be ubgraded to $L^\infty$-convergence. 
One of the keys to construct an asymptotic estimate for the Lyapunov functional is to obtain an asymptotic universal estimate for $uS(v)$. This estimate will be the objective of Section \ref{sec;AE}, where we will employ the function 
\[
 \varphi(s) := \exp\left\{\frac{r}{(k-1)(a+s)^{k-1}} \right\}, 
 \qquad \qquad s>0,
\]
with some $r>0$, which is similar to that used in \cite{Mizukami-Yokota_02}, to establish the estimate as 
\[
 \limsup_{t\to \infty}\io u^p \varphi(v) \le C_1 \io u_0
\]
with some $C_1>0$. Smoothing estimates for the heat semigroup will enable us to turn this into
\[
 \limsup_{t\to \infty}\lp{\infty}{u\cd S(v\cd)} \le C_2
\] 
with some $C_2$. If $χC_2$ is sufficiently small, this facilitates the Lyapunov type arguments alluded to above. (They will be given in Section \ref{sec;conv}.) It turns out that, due to $k>1$ (cf. \eqref{ineq;uSv}), the ``constants'' $C_1$ and $C_2$ depend on $M=\io u_0$ in such a way that actually large initial masses augment the chances for convergence:

\begin{thm}\label{secondthm}
 Let $n\ge 2$ and let $\Om\subset ℝ^n$ be a bounded domain with smooth boundary, let $a\ge 0$, $k>1$, let $v_\star>0$ and $χ_0\in\big(0,k(a+v_\star)^{k-1}\sqrt{\f2n}\big)$. Then there is $M_0>0$ such that for every $M\ge M_0$, for every $χ\in(0,χ_0)$, every function $S$ as in \eqref{condi;S} and all initial data $u_0$, $v_0$ as in \eqref{condi;ini} that satisfy $\io u_0=M$ and $\min v_0\ge v_\star$, the problem \eqref{cp} has a global classical solution converging as indicated in \eqref{result:convergenceestimate}.
\end{thm}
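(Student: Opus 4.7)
The plan is to follow exactly the strategy outlined for Theorem \ref{mainthm}, but to exploit the $M$-dependence of the constants in the asymptotic universal estimate so that, for $\chi$ fixed below the global-existence threshold $k(a+v_\star)^{k-1}\sqrt{2/n}$, the smallness needed to activate the Lyapunov mechanism is purchased by enlarging $M$ rather than by shrinking $\chi$. Global existence of a classical solution on $(0,\infty)$ will be in place as soon as $\chi<\chi_0<k(a+v_\star)^{k-1}\sqrt{2/n}$, since the boundedness arguments of Section \ref{sec;AE} (which refine \cite{Mizukami-Yokota_02}) cover precisely this regime. The genuinely new content is therefore the identification of a large-mass window in which the Lyapunov argument of Section \ref{sec;conv} closes.

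First I would revisit the chain
\[
    \limsup_{t\to\infty} \io u^p \varphi(v) \le C_1 \io u_0,
    \qquad
    \limsup_{t\to\infty} \lp{\infty}{u\cd S(v\cd)} \le C_2,
\]
from Section \ref{sec;AE} with an eye to isolating how $C_1$ and $C_2$ depend on $M=\io u_0$. The heuristic is clean: integrating the second equation gives $\tfrac{d}{dt}\io v=-\io v+M$, so $\io v\to M$ as $t\to\infty$, and heat-semigroup smoothing promotes this to a pointwise lower bound on $v$ of order $M/|\Omega|$ for large times. Combined with $S(v)\le (a+v)^{-k}$ and $u$ of order $M$, the product $uS(v)$ is therefore of order $M^{1-k}$, which decays in $M$ precisely because $k>1$. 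The task is to turn this heuristic into an explicit bound $C_2=C_2(M)$ with $C_2(M)\to 0$ as $M\to\infty$, leveraging the inequality referenced as \eqref{ineq;uSv}.

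Once such a $C_2(M)$ is in hand, the Lyapunov dissipation inequality of Section \ref{sec;conv} asks $\chi\cdot C_2$ to fall below an explicit threshold $\tau>0$ that depends only on $n,\Omega,k,a,v_\star$ and the weight $K$ entering $\mathcal F$. Choosing $M_0$ so large that $\chi_0 C_2(M)<\tau$ for every $M\ge M_0$ then makes $\mathcal F(u,v)$ an eventual Lyapunov functional for every $\chi\in(0,\chi_0)$; integrating the resulting dissipation inequality yields $L^2$-convergence of $(u,v)$ to $(\ol{u_0},\ol{u_0})$, which is boosted to exponential $L^\infty$-convergence via the $\chi$-independent a priori bounds and heat-semigroup smoothing, exactly as in Theorem \ref{mainthm}.

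The main obstacle will be the careful bookkeeping of the $M$-dependence through Section \ref{sec;AE}: the weighted integrals involving $u^p\varphi(v)$ and the subsequent Moser-type iteration each contribute their own $M$-dependent factors, and it must be verified that none of them overwhelms the favourable factor produced by $S(v)\sim M^{-k}$. This is essentially the reason the argument would break down at $k=1$, and is also why only an eventual lower bound on $v$ (delivered by $\io v\to M$) is needed --- the initial hypothesis $\min v_0\ge v_\star$ being invoked only to secure the global-existence window.
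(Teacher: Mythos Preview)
Your outline is correct and follows essentially the same route as the paper. One imprecision to fix: global existence via Lemma~\ref{lem;GE} is governed not by $k(a+v_\star)^{k-1}\sqrt{2/n}$ but by $k(a+\eta)^{k-1}\sqrt{2/n}$ with $\eta=\eta(M,v_\star)<v_\star$ from \eqref{def;eta;GL}, so the condition $\chi<\chi_0$ does not by itself secure existence---you must first take $M$ large enough that $\eta$ is close enough to $v_\star$ to push the threshold above $\chi_0$ (and, separately, large enough that $\chi_0<k(a+K_1 M_0)^{k-1}\sqrt{2/n}$ so that Lemmata~\ref{lem;esti;u^p}--\ref{lem;esti;uSv} and \ref{lem;asy;energy} apply). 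The paper makes this two-stage choice explicit: fix a preliminary $M_1$ meeting both threshold conditions, extract $\delta_1=\delta(\chi_0,a,k,M_1)$ from Lemma~\ref{lem;asy;energy}, and then take $M_0\ge M_1$ so that $\tfrac{M}{(a+K_1 M)^k}<\tfrac{\delta_1}{\chi_0}$ for all $M\ge M_0$. The bookkeeping you flag as the main obstacle is already done in \eqref{ineq;uSv}: $K_5$ there depends on the preliminary $M_1$ but not on $M$, so $C_2(M)=K_5\,M/(a+K_1 M)^k\to 0$ is immediate.
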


The proofs of both theorems (and of Remark \ref{rem;deltaindependence}) will be given at the end of Section \ref{sec;conv}. 

{\bf Notation.} While constants $C_i$ are ``local'' to each proof, constants denoted by $K_i$ are meant to be the same ones as introduced with the same name by a previous Lemma.
 
\section{An asymptotic universal estimate for $\bm{uS(v)}$}\label{sec;AE}

Let us start with recalling some properties which have been established in previous studies and are fundamental when discussing results concerning the global existence of classical solutions in the setting of \eqref{cp}. 
For any $T\in(0,\infty]$ the arguments employed in the proof of \cite[Lemma 3.1]{Fujie-Senba_2018} show that all classical solutions $(u,v)$ of \eqref{cp} in $\Omega\times(0,T)$ satisfy the following time-independent lower estimate for $v$:
\[
 v(x,t) \ge \eta \quad \mbox{for all} \ x\in \Omega \ \mbox{and} \ t\in (0,T),
\]
where $\eta$ is defined by 
\begin{align}\label{def;eta;GL}
 \eta :=  4 \left(1+\sqrt{1+\f{4 \vstar}{c_0 \mass}}\right)^{-2}\vstar, \qquad \mass=\io u_0,\quad \vstar=\min v_0,
\end{align}
with $c_0>0$ being a lower bound for the fundamental solution of $w_t=\Delta w -w$ with Neumann boundary condition. The formula in \eqref{def;eta;GL} is an explicit form of the expression given in \cite[(1.5)]{Mizukami-Yokota_02}. 

In \cite[Theorem 1.1]{Mizukami-Yokota_02}, the above inequality was utilized to show existence of time-global classical solutions. 
We state it for reference in the next lemma.  

\begin{lem}\label{lem;GE}
Assume that $S\in C^{1+\theta}([0,\infty))$, with some $\theta>0$, satisfies \eqref{condi;S} for some $a\geq0$ and $k>1$ and that $u_0, v_0$ fulfill \eqref{condi;ini}. Let $\eta$ be given by \eqref{def;eta;GL} and suppose $\chi\in \big( 0,k(a+\eta)^{k-1}\sqrt{\frac 2n} \big)$. 
Then the problem \eqref{cp} possesses a 
unique global classical solution $(u,v)$ satisfying 
\eqref{th;class}, $u>0$ and $v>0$ in $\Omega\times (0,\infty)$, 
and 
\[
  u\in L^\infty \big(\Omega\times (0,\infty)\big) 
  \quad \mbox{and} \quad 
  v\in L^\infty \big(\Omega\times (0,\infty)\big)
\] 
and moreover, 
\[
  \io u(\cdot,t) = \io u_0 \quad \mbox{for all} \ t>0. 
\]
\end{lem}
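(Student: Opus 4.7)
The plan is to prove the lemma by combining a standard local existence argument with a time-uniform weighted $L^p$ estimate whose closure forces exactly the condition $\chi<k(a+\eta)^{k-1}\sqrt{\tfrac{2}{n}}$. First, I would invoke a fixed-point argument on the mild formulation of \eqref{cp} to obtain a unique classical solution on a maximal interval $[0,\tmax)$, together with an extensibility criterion of the form $\lp{\infty}{u\cd}\to\infty$ as $t\nearrow\tmax$ if $\tmax<\infty$. Mass conservation $\io u(\cdot,t)=\io u_0$ follows by integrating the $u$-equation against $1$ and using the Neumann conditions. The pointwise lower bound $v\ge \eta$ comes from writing $v_t-\Delta v+v = u\ge 0$, representing $v$ via the Neumann heat semigroup, and using a uniform-in-time lower bound $c_0>0$ on the fundamental solution, exactly as in \cite[Lemma 3.1]{Fujie-Senba_2018}; the explicit formula \eqref{def;eta;GL} arises from balancing this kernel estimate against $\vstar$ and $\mass$.

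The heart of the proof is a weighted energy inequality. For $p>1$ and $r>0$ to be chosen, I would test the $u$-equation with $p u^{p-1}\varphi(v)$ and combine it with the contribution $\varphi'(v) v_t u^p$ obtained from the $v$-equation, where $\varphi(s)=\exp\bigl\{\frac{r}{(k-1)(a+s)^{k-1}}\bigr\}$. Two integrations by parts give a dissipation identity whose main negative terms are
\begin{equation*}
 -\frac{4(p-1)}{p}\io \varphi(v)\,|\nabla u^{p/2}|^2 \quad \text{and}\quad -\io u^p |\varphi'(v)|\,|\nabla v|^2,
\end{equation*}
while the chemotaxis contribution, after using $S(v)\le(a+v)^{-k}$ together with $\varphi'(v)=-r(a+v)^{-k}\varphi(v)$, produces cross terms of the form $\chi\io u^{p/2}\varphi(v)^{1/2}\cdot u^{p/2}(a+v)^{-k}\varphi(v)^{1/2}|\nabla u^{p/2}|\,|\nabla v|$ and an extra $\io u^p\varphi'(v)\Delta v$ term. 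The latter is handled by integrating by parts once more, redistributing a $|\nabla u^{p/2}|^2$-term and a $|\nabla v|^2$-term; both of these are then absorbed by the two negative terms via Young's inequality. A careful tracking of the constants, using $v\ge\eta$ to estimate $(a+v)^{-k}\le(a+\eta)^{-k}$ precisely where the chemotactic cross term meets the diffusion, produces a non-negativity condition equivalent to $\chi^2<k^2(a+\eta)^{2(k-1)}\cdot \frac{2}{n}$, matching the stated threshold. The $\sqrt{2/n}$ factor is the fingerprint of this double integration by parts, since the estimate $|\Delta v|^2\le n|D^2v|^2$ is what allows one to trade a pointwise $|\Delta v|$ for a Bochner-type quantity.

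Granted an $r>0$ making the dissipation non-positive, an application of the Gronwall inequality (with control of the zero-order $\io u^p\varphi(v)$ absorbed through mass conservation and the elementary bound $\varphi(v)\le \varphi(\eta)$) yields $\sup_{t<\tmax}\io u^p\varphi(v)<\infty$ for some $p>n/2$. Combining this with Neumann semigroup smoothing estimates for $v_t=\Delta v-v+u$ gives $v\in L^\infty((0,\tmax);W^{1,\infty}(\Omega))$, and a Moser-type or variation-of-constants bootstrap applied to the divergence-form equation for $u$ upgrades this to a uniform $L^\infty$-bound for $u$, contradicting the extensibility criterion if $\tmax<\infty$. Positivity of $u$ and $v$ follows from the strong maximum principle applied to each equation.

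The main obstacle I expect is calibrating the two Young inequalities so that the threshold constant is exactly $k(a+\eta)^{k-1}\sqrt{2/n}$ and not a weaker multiple: one has one free parameter $r$ in the weight and one free splitting parameter in Young, and both must be optimized jointly against the two negative terms. This is precisely the computation carried out in \cite{Mizukami-Yokota_02}, so in the present paper I would simply cite that reference and move on.
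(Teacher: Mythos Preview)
Your final conclusion is exactly what the paper does: Lemma~\ref{lem;GE} is stated without proof, as a direct citation of \cite[Theorem~1.1]{Mizukami-Yokota_02}. Your outline of the underlying argument is largely accurate and in fact mirrors the computations the paper itself carries out in Lemmata~\ref{lem;dif;intupphiq} and~\ref{lem;H<=0}, where the weighted $L^p$ estimate is revisited for quantitative purposes.

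One correction, however: your attribution of the factor $\sqrt{2/n}$ to a Bochner-type inequality $|\Delta v|^2\le n|D^2v|^2$ is off. No second-order quantity $|D^2v|^2$ enters the argument; after all integrations by parts only $|\nabla u|^2$ and $|\nabla v|^2$ survive (see the expression for $H_{\ep,p,r,\chi}$ in \eqref{def;Hep}). The threshold $k(a+\eta)^{k-1}\sqrt{2/n}$ instead arises from the constraint $p>\tfrac{n}{2}$ needed for the subsequent bootstrap to $L^\infty$: optimizing $r$ and the Young-splitting parameter so that $H_{\ep,p,r,\chi}\le 0$ on $[\eta,\infty)$, and then sending $p\downarrow \tfrac{n}{2}$ and $\ep\downarrow 0$, yields precisely this bound, as the computation in Lemma~\ref{lem;H<=0} makes explicit.
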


In the case of $a=0$, in order to control $S(v)$ for large time, we will need to consider asymptotic upper bounds on $\frac{1}{v^k}$ which do not depend on time. Hence, we will make use of the following lower estimate for $v$ which only depends on $\Omega$ and the mass of $u_0$, which was established in \cite[Lemma 3.1]{Winkler-Yokota}.

\begin{lem}\label{lem;lower;v}
There exists $K_1>0$ such that 
whenever $(u,v)$ is a global classical solution of \eqref{cp} 
for some $\chi >0$ and some $(u_0,v_0)$ fulfilling \eqref{condi;ini}, 
the inequality 
\[
  \liminf_{t\to \infty}\inf_{x\in \Omega} v(x,t) \ge 2K_1 \io u_0 
\]
holds. 
\end{lem}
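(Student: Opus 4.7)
The plan is to prove Lemma \ref{lem;lower;v} by combining the variation-of-constants representation for $v$ with the conservation of mass $\io u(\cdot,t) \equiv M$ guaranteed by Lemma \ref{lem;GE}, and then reading off a lower bound from a uniform large-time estimate of the Neumann heat kernel. Writing $(e^{\tau \Delta_N})_{\tau\ge 0}$ for the Neumann heat semigroup on $\Omega$ with kernel $G_{\Omega}(x,y,\tau)$, the $v$-equation yields
\[
 v(\cdot,t) \;=\; e^{t(\Delta_N-1)} v_0 \;+\; \int_0^t e^{(t-s)(\Delta_N-1)} u(\cdot,s)\, ds \;=\; e^{-t} e^{t\Delta_N} v_0 + \int_0^t e^{-(t-s)} e^{(t-s)\Delta_N} u(\cdot,s)\, ds.
\]
Since $v_0\ge 0$, the first term is nonnegative and in fact dominated from below by $e^{-t}\min v_0$, hence vanishes in the limit and contributes nothing to the $\liminf$.

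For the Duhamel integral I would exploit the standard fact that on a smooth bounded domain the Neumann heat kernel converges to equilibrium exponentially, i.e.\ there exist $\tau_0=\tau_0(\Omega)>0$ and a constant, which for concreteness can be taken as $\tfrac{1}{2|\Omega|}$, such that
\[
 G_\Omega(x,y,\tau) \ge \frac{1}{2|\Omega|} \qquad \text{for all } \tau \ge \tau_0 \text{ and all } x,y \in \Ombar.
\]
Using this bound on the temporal subinterval $s\in(0,t-\tau_0)$ together with $u\ge 0$ and $\io u(\cdot,s)=M$, for every $t>\tau_0$ and $x\in\Ombar$ the Duhamel term is estimated by
\[
 \int_0^{t-\tau_0} e^{-(t-s)} \io G_\Omega(x,y,t-s)\, u(y,s)\, dy\, ds \;\ge\; \frac{M}{2|\Omega|} \int_0^{t-\tau_0} e^{-(t-s)}\, ds \;=\; \frac{M}{2|\Omega|}\bigl(e^{-\tau_0}-e^{-t}\bigr).
\]

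Combining both contributions yields $\inf_{x\in\Omega} v(x,t) \ge \frac{M}{2|\Omega|}(e^{-\tau_0}-e^{-t})$ for all sufficiently large $t$, so that passing to the liminf and setting $K_1 := \frac{e^{-\tau_0}}{4|\Omega|}$, which depends only on $\Omega$, delivers the claimed bound $\liminf_{t\to\infty}\inf_{x\in\Omega} v(x,t) \ge 2K_1 M$.

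The only nontrivial ingredient is the uniform lower bound on $G_\Omega$ for times bounded away from zero, which I expect to be the main obstacle to quote cleanly; it nevertheless is standard, being a direct consequence of Aronson-type two-sided Gaussian kernel estimates for Neumann boundary conditions on smooth bounded domains, or equivalently of the uniform exponential convergence to equilibrium granted by the spectral gap of $-\Delta_N$. Everything else only invokes positivity of the heat semigroup, mass conservation (provided by Lemma \ref{lem;GE}) and the exponential decay $e^{-t}\min v_0 \to 0$, so that no dependence on $\chi$ or on $v_0$ enters the final constant $K_1$.
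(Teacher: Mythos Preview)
Your argument is correct and is precisely the natural route: Duhamel representation for $v$, a uniform lower bound $G_\Omega(x,y,\tau)\ge \tfrac{1}{2|\Omega|}$ for $\tau\ge\tau_0$ (which follows from the spectral gap of $-\Delta_N$), and mass conservation of $u$. The paper does not give its own proof of this lemma but simply quotes \cite[Lemma~3.1]{Winkler-Yokota}; the proof there proceeds along exactly the same lines, so your approach coincides with the cited one. One minor remark: you invoke Lemma~\ref{lem;GE} for mass conservation, but that lemma carries an extra hypothesis on $\chi$, whereas Lemma~\ref{lem;lower;v} is stated for \emph{any} global classical solution; since $\io u(\cdot,t)=\io u_0$ follows directly from integrating the first equation in \eqref{cp} under Neumann conditions, no appeal to Lemma~\ref{lem;GE} is needed.
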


In order to prepare a testing procedure suitable to our purpose, we will now consider a test function $\varphi\in C^2((0,\infty))$ which has a structural resemblance to the test function used in \cite[Lemma 3.2]{Mizukami-Yokota_02}. To be precise, for $a\ge0$ and $k>1$ as in \eqref{condi;S} and some $r>0$ we define
\begin{align}\label{def;phi}
  \varphi(s) := \exp \left\{\frac{r}{(k-1)(a+s)^{k-1}}\right\} 
  \quad \mbox{for} \ s>0.
\end{align}
Obviously, from straightforward differentiation we find that 
\begin{align}\label{dif;phi}
 \varphi'(s) = -\frac{r}{(a+s)^k} \varphi(s) \quad \mbox{for all}\ s>0,
\end{align}
which will be used in the next lemma to derive a differential inequality for functionals of the form $\io u^p \varphi(v)$ with some $p >\frac{n}{2}$. 

\begin{lem}\label{lem;dif;intupphiq}
Let $r>0$, $a\ge 0$, $k>1$, $p>1$, $\ep\in(0,1)$, $χ>0$ 
 and let $\varphi$ be the function defined by \eqref{def;phi}. 
Then for every global classical solution $(u,v)$ to \eqref{cp}, the inequality 
\[
\frac  d{dt} \io u^p \varphi(v) 
\le  - \ep p(p-1) \io u^{p-2}\varphi(v)|\na u|^2 
  + \io u^p H_{\ep,p,r,\chi}(v) \varphi (v)|\na v|^2 
 + r\io u^p \varphi (v) \frac{v-u}{(a+v)^k}
\] 
holds on $(0,\infty)$, with 
\begin{align}\label{def;Hep}
H_{\ep,p,r,\chi}(s) :=  
-\frac{\chi pr S(s)}{(a+s)^k} 
    - \frac{r^2}{(a+s)^{2k}} -  \frac{kr}{(a+s)^{k+1}} 
+ \frac{\left(
   \frac{2pr}{(a+s)^k} +\chi p(p-1)S(s)
   \right)^2}{4(1-\ep)p(p-1)}\quad \text{for } s>0.
\end{align}
\end{lem}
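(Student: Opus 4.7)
The plan is a straightforward testing computation: differentiate $\io u^p\varphi(v)$ in time, substitute the PDEs, integrate by parts under the Neumann conditions, and absorb cross terms via Young's inequality. Concretely, writing
\[
 \frac{d}{dt}\io u^p\varphi(v)=p\io u^{p-1}\varphi(v)u_t+\io u^p\varphi'(v)v_t
\]
and inserting $u_t=\Delta u-\chi\na\cdot(uS(v)\na v)$ and $v_t=\Delta v-v+u$, a single integration by parts in each of the four diffusion/taxis contributions yields
\begin{align*}
 \frac{d}{dt}\io u^p\varphi(v)
 &=-p(p-1)\io u^{p-2}\varphi(v)|\na u|^2
 -2p\io u^{p-1}\varphi'(v)\na u\cdot\na v\\
 &\quad+\chi p(p-1)\io u^{p-1}\varphi(v)S(v)\na u\cdot\na v
 +\chi p\io u^p\varphi'(v)S(v)|\na v|^2\\
 &\quad-\io u^p\varphi''(v)|\na v|^2+\io u^p\varphi'(v)(u-v).
\end{align*}

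Using \eqref{dif;phi} I would rewrite $-2p\varphi'(v)=\frac{2pr}{(a+v)^k}\varphi(v)$ to merge the two cross terms into a single integrand of the form
\[
 u^{p-1}\varphi(v)\Bigl(\frac{2pr}{(a+v)^k}+\chi p(p-1)S(v)\Bigr)\na u\cdot\na v.
\]
Pointwise Young's inequality in the form $\alpha XY\le(1-\ep)p(p-1)X^2+\frac{\alpha^2}{4(1-\ep)p(p-1)}Y^2$ applied with $X=u^{(p-2)/2}\varphi(v)^{1/2}|\na u|$ and $Y=u^{p/2}\varphi(v)^{1/2}|\na v|$ absorbs a fraction $(1-\ep)$ of the good dissipation term, leaving $-\ep p(p-1)\io u^{p-2}\varphi(v)|\na u|^2$, and contributes a $|\na v|^2$-term whose coefficient is precisely the Young constant.

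Differentiating \eqref{dif;phi} once more gives $\varphi''(s)=\bigl(\frac{r^2}{(a+s)^{2k}}+\frac{kr}{(a+s)^{k+1}}\bigr)\varphi(s)$, so that collecting all contributions to $\io u^p\varphi(v)|\na v|^2$ (namely the $-\chi prS(v)/(a+v)^k$ piece from the taxis term, the $-r^2/(a+v)^{2k}$ and $-kr/(a+v)^{k+1}$ pieces from $-\varphi''$, and the Young constant from the previous step) produces exactly $H_{\ep,p,r,\chi}(v)$ as defined in \eqref{def;Hep}. Finally, the source term is rewritten via \eqref{dif;phi} once more: $\varphi'(v)(u-v)=r\varphi(v)(v-u)/(a+v)^k$, which gives the stated $r\io u^p\varphi(v)(v-u)/(a+v)^k$. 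The boundary terms in all integrations by parts vanish because $\pa_\nu u=\pa_\nu v=0$ on $\pa\Omega$.

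The argument involves no analytical obstacle beyond careful bookkeeping; the one decision that matters is the symmetric Young split, because putting the $(1-\ep)p(p-1)$ weight on the $|\na u|^2$-side is what enables the stated form of $H_{\ep,p,r,\chi}$ and leaves the $\ep$-factor available to later applications of the lemma. Smoothness and positivity of $u,v,\varphi(v)$ guaranteed by \eqref{th;class} and Lemma \ref{lem;GE} justify all differentiations and integrations by parts.
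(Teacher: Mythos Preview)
Your proof is correct and follows essentially the same route as the paper's: differentiate, integrate by parts using the Neumann conditions, use \eqref{dif;phi} (and its derivative) to rewrite the $\varphi'$- and $\varphi''$-contributions, and then apply Young's inequality with weight $(1-\ep)p(p-1)$ on the $|\na u|^2$-side to absorb the combined cross term. The only cosmetic difference is that the paper invokes \eqref{dif;phi} already in the first line (writing $\io u^p\varphi'(v)v_t=-r\io u^p\frac{\varphi(v)}{(a+v)^k}(\Delta v-v+u)$), whereas you keep $\varphi'$ and $\varphi''$ explicit until the collection step; the computations are otherwise identical.
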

\begin{proof}
From straightforward calculations, while relying on \eqref{dif;phi}, we derive that 
\begin{align}\label{start;dif}
 \frac d{dt} \io u^p\varphi(v) 
 &= p\io u^{p-1}\varphi(v) \big( \Delta u -\chi \na \cdot (uS(v)\na v)\big) 
  - r\io u^{p} \frac{\varphi (v)}{(a+v)^k} (\Delta v - v +u)
\end{align}
on $(0,\infty)$. 
Here, noting from integration by parts and \eqref{dif;phi} that  
\begin{align*} 
p \io u^{p-1} \varphi(v)\Delta u 
= - p(p-1) \io u^{p-2}\varphi(v)|\na u|^2
+pr \io u^{p-1}\frac{\varphi(v)}{(a+v)^k}\na u \cdot \na v
\end{align*}
and 
\begin{align*}
-\chi p \io u^{p-1}\varphi(v) \na \cdot (uS(v)\na v)
& = \chi p(p-1) \io u^{p-1} S(v) \varphi(v)\na u\cdot \na v 
\\ &\quad  \, 
-\chi pr \io u^p S(v) \frac{\varphi (v)}{(a+v)^k} |\na v|^2
\end{align*}
as well as 
\begin{align*}
 - r \io u^p\frac{\varphi(v)}{(a+v)^k} \Delta v 
 &=  rp\io u^{p-1} \frac{\varphi(v)}{(a+v)^k} \na u\cdot \na v 
 - r^2 \io u^p \frac{\varphi(v)}{(a+v)^{2k}} |\na v|^2 
\\
 &\quad \,  - kr \io u^p\frac{\varphi(v)}{(a+v)^{k+1}} |\na v|^2 
\end{align*}
hold on $(0,\infty)$, we obtain from \eqref{start;dif} that 
\begin{align*}
&\frac  d{dt} \io u^p \varphi(v) 
\\
&= 
-p(p-1) \io u^{p-2}\varphi(v)|\na u|^2
+ \io u^{p-1}\varphi(v)
  \left(
   \frac{2pr}{(a+v)^k} +\chi p(p-1)S(v)
   \right) 
  \na u \cdot \na v
\\
& \quad \, + \io u^p 
  \left( 
    -\frac{\chi pr S(v)}{(a+v)^k} 
    - \frac{r^2}{(a+v)^{2k}} -  \frac{kr}{(a+v)^{k+1}}
  \right) 
  \varphi (v)|\na v|^2
+ r\io u^p \varphi (v) \frac{v-u}{(a+v)^k}. 
\end{align*}
Now we let $\ep\in (0,1)$. 
Then from Young's inequality we have
\begin{align*}
u^{p-1}\varphi(v)
  \left(
   \frac{2pr}{(a+v)^k} +\chi p(p-1)S(v)
   \right) 
  \na u \cdot \na v 
&\le 
  (1-\ep) p(p-1) u^{p-2} \varphi(v) |\na u|^2 
\\ &\quad\, 
  + \frac{\left(
   \frac{2pr}{(a+v)^k} +\chi p(p-1)S(v)
   \right)^2}{4(1-\ep)p(p-1)} u^p \varphi(v)|\na v|^2  
\end{align*}
and infer that 
\begin{align*}
\frac  d{dt} \io u^p \varphi(v) 
&\le  - \ep p(p-1)\! \io u^{p-2}\varphi(v)|\na u|^2 
  +\! \io u^p H_{\ep,p,r,\chi}(v) \varphi (v)|\na v|^2 
 +\! r\io u^p \varphi (v) \frac{v-u}{(a+v)^k},  
\end{align*}
where $H_{\ep,p,r,\chi}$ is the function defined by \eqref{def;Hep}, 
which completes the proof. 
\end{proof} 

Observing that the differential inequality only depends on $\chi$ inside the function $H_{\ep,p,r,\chi}$, we can conclude that whenever the sign of $H_{\ep,p,r,\chi}$ is non-positive the chemotactic influence in this inequality is negligible. Our next aim is to verify that one can find a suitable combination of parameters $\ep,p,r$ and $\chi_0$ such that $H_{\ep,p,r,\chi}$ is bounded from above by zero independently of $\chi\in(0,\chi_0]$.

\begin{lem}\label{lem;H<=0}
Let $a\ge 0$, $k>1$ and $\etatilde>0$. 
For all $\chi_0 \in \big( 0,k(a+\etatilde)^{k-1}\sqrt{\frac 2n} \big)$ 
there exist $p=p(\chi_0,a,k,\etatilde)>\frac n2$ 
and $\ep=\ep(\chi_0,a,k,\etatilde) \in (0,1)$  such that 
\[
  H_{\ep,p,r,\chi}(s) \le 0 \quad \mbox{for all} \ s \ge \etatilde 
\]
holds for all $\chi \in (0,\chi_0]$ with 
\begin{align}\label{def;r}
 r:= \frac{(p-1)\chi_0}{2}\sqrt{\frac p{1+\ep p -\ep}}.  
\end{align} 
\end{lem}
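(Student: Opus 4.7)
\textbf{Proof plan for Lemma \ref{lem;H<=0}.} My approach is to first simplify the expression \eqref{def;Hep} by expanding the square, then exploit monotonicity in $\chi$ and $S$ to reduce to a worst-case scenario, and finally use the structural choice of $r$ together with the hypothesis on $\chi_0$ to pick $p$ and $\ep$ appropriately.

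First I will write $A:=(a+s)^{-k}$ and expand $\bigl(\tfrac{2pr}{(a+s)^k}+\chi p(p-1)S(s)\bigr)^2=4p^2r^2A^2+4p^2(p-1)r\chi AS(s)+\chi^2p^2(p-1)^2S(s)^2$. Dividing by $4(1-\ep)p(p-1)$ and combining with the other terms in \eqref{def;Hep} regroups $H_{\ep,p,r,\chi}(s)$ as
\[
 H_{\ep,p,r,\chi}(s)= r^2A^2\cdot\frac{1+\ep(p-1)}{(1-\ep)(p-1)}+\frac{\ep\chi pr S(s)A}{1-\ep}+\frac{\chi^2p(p-1)S(s)^2}{4(1-\ep)}-\frac{kr}{(a+s)^{k+1}}.
\]
In this form the $S$-dependent part is non-negative and non-decreasing in $S$, and the whole expression is non-decreasing in $\chi$ on $[0,\chi_0]$. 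Since $0\le S(s)\le A$, it suffices to prove the bound when $\chi=\chi_0$ and $S(s)=A$.

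Next, substituting \eqref{def;r} gives $r^2=\tfrac{(p-1)^2\chi_0^2 p}{4(1+\ep(p-1))}$, so the first term simplifies to $\tfrac{p(p-1)\chi_0^2}{4(1-\ep)}A^2$; combined with the quadratic-in-$S$ term, these contribute $\tfrac{p(p-1)\chi_0^2}{2(1-\ep)}A^2$. Since $k>1$, the estimate $s\ge\etatilde$ yields $(a+s)^{-(k+1)}\ge(a+\etatilde)^{k-1}A^2$, so after dividing out the common factor $A^2$ (possible since $A>0$), the inequality $H_{\ep,p,r,\chi_0}\le 0$ reduces to
\[
 \frac{p(p-1)\chi_0^2}{2(1-\ep)}+\frac{\ep p\chi_0 r}{1-\ep}\le kr(a+\etatilde)^{k-1}.
\]
Substituting $r$ once more and dividing by $\tfrac{(p-1)\chi_0}{2}\sqrt{p}$ this is equivalent to
\[
 \frac{\sqrt{p}\,\chi_0}{1-\ep}\le\frac{1}{\sqrt{1+\ep(p-1)}}\Bigl[k(a+\etatilde)^{k-1}-\frac{\ep p\chi_0}{1-\ep}\Bigr].
\]

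Now I choose the parameters. As $\ep\downarrow 0$ the displayed inequality becomes $\sqrt{p}\,\chi_0\le k(a+\etatilde)^{k-1}$, i.e.\ $p\le\bigl(k(a+\etatilde)^{k-1}/\chi_0\bigr)^2$. The hypothesis $\chi_0<k(a+\etatilde)^{k-1}\sqrt{2/n}$ guarantees that $\bigl(k(a+\etatilde)^{k-1}/\chi_0\bigr)^2>n/2$, so I can fix some $p\in\bigl(n/2,\,(k(a+\etatilde)^{k-1}/\chi_0)^2\bigr)$, making the inequality strict at $\ep=0$. By continuity of both sides in $\ep$, there exists $\ep=\ep(\chi_0,a,k,\etatilde)\in(0,1)$ for which the inequality still holds; this completes the proof. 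The only delicate step is the bookkeeping in the substitution of $r$; the rest is essentially dictated by tracking how the margin in $\chi_0<k(a+\etatilde)^{k-1}\sqrt{2/n}$ creates the room needed to admit $p>n/2$.
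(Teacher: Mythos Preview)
Your proof is correct and follows essentially the same route as the paper: both expand $H_{\ep,p,r,\chi}$ into the form
\[
 r^2A^2\cdot\frac{1+\ep(p-1)}{(1-\ep)(p-1)}+\frac{\ep\chi pr S(s)A}{1-\ep}+\frac{\chi^2p(p-1)S(s)^2}{4(1-\ep)}-\frac{kr}{(a+s)^{k+1}},
\]
replace $S$ by $(a+s)^{-k}$ and $\chi$ by $\chi_0$, and then use the specific value of $r$ together with the hypothesis $\chi_0<k(a+\etatilde)^{k-1}\sqrt{2/n}$ to select $p>\frac n2$ and then $\ep\in(0,1)$ by continuity. The only cosmetic differences are that the paper packages the final scalar inequality slightly differently and picks $p$ and $\ep$ in one step, and that it additionally restricts to $p<n$ (which is not needed for this lemma but is convenient later).
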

\begin{proof}
Since $\chi_0 < k(a+\etatilde)^{k-1}\sqrt{\frac 2n}$, 
we can find $p=p(\chi_0,a,k,\etatilde)\in (\frac n2,n)$ and 
$\ep=\ep(\chi_0,a.k,\etatilde)\in (0,1)$ such that 
\[
 \frac{\chi_0}{k(1-\ep)}\sqrt{p(1+\ep p -\ep)} + \ep p \chi_0 
 \le (a+ \etatilde)^{k-1}. 
\]
Now we let $r$ be given as in \eqref{def;r}. 
Then straightforward calculations using condition \eqref{condi;S} and the fact $\chi \le \chi_0$ imply that 
\begin{align*}
 H_{\ep,p,r,\chi}(s)& = 
 \frac{(1+\ep p - \ep)}{(1-\ep)(p-1)(a+s)^{2k}}r^2 
 +\frac{\ep p \chi rS(s)}{(1-\ep)(a+s)^{k}}
 + \frac{p(p-1)\chi^2 S^2(s)}{4(1-\ep)} 
 - \frac{kr}{(a+s)^{k+1}}
\\ 
 & \le \frac{kr}{(a+s)^{2k}}
 \left(
   \frac{(1+\ep p - \ep)}{k(1-\ep)(p-1)}r 
   +\frac{\ep p \chi_0}{k(1-\ep)}
 + \frac{p(p-1)\chi_0^2}{4k(1- \ep)r} 
 - (a+s)^{k-1}
 \right).
\end{align*} 
Here, noting from the definition of $r$ and $p,\ep$ that 
\begin{align*}  
 \frac{(1+\ep p - \ep)}{k(1-\ep)(p-1)}r 
 +\frac{\ep p \chi_0}{k(1-\ep)}
 + \frac{p(p-1)\chi_0^2}{4k(1- \ep)r} 
 = \frac{\chi_0}{k(1-\ep)}\sqrt{p(1+\ep p -\ep)} + \ep p \chi_0 
 \le (a+\etatilde)^{k-1},  
\end{align*} 
we can verify that 
\begin{align*}
 H_{\ep,p,r,\chi} (s) 
 \le 
 \frac{kr( (a+\etatilde)^{k-1}-(a+s)^{k-1})}{(a+s)^{2k}} 
 \le 0 \quad \mbox{for all} \ s \ge \etatilde  
\end{align*}
is valid. 
\end{proof}

Combining the Lemmata \ref{lem;dif;intupphiq} and \ref{lem;H<=0}, we can now derive the following asymptotic $L^p$-estimate of the first solution component for a certain choice of $p>\frac{n}{2}$.

\begin{lem}\label{lem;esti;u^p}
Let $M_0\ge0$, $a\ge 0$, $k>1$ and suppose that $S$ satisfies \eqref{condi;S}. Then
for all $\chi_0 \in (0,k(a+K_1M_0)^{k-1}\sqrt{\frac 2n})$ 
there exist $p=p(\chi_0,a,k,M_0)>\frac{n}{2}$ and 
$K_2 = K_2 (\chi_0,a,k,M_0)$ such that 
whenever $(u,v)$ is a global classical solution of \eqref{cp} 
with $\chi \le \chi_0$ and $(u_0,v_0)$ fulfilling \eqref{condi;ini} as well as $\io u_0=\mass\ge M_0$, 
we have 
\begin{align}\label{ineq;limsup;u^p} 
 \limsup_{t\to \infty} \lp{p}{u\cd} \le K_2 \io u_0.  
\end{align}
\end{lem}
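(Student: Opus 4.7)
The plan is to combine the three preceding ingredients—the eventual pointwise lower bound on $v$ from Lemma \ref{lem;lower;v}, the parameter-tuning result Lemma \ref{lem;H<=0}, and the differential inequality from Lemma \ref{lem;dif;intupphiq}—to produce an ODI of the form $y' \le -cy + C\mass^p$ for $y(t):=\io u^p\varphi(v)$, and then read off the claim by a standard Grönwall-style argument together with $\varphi(v)\ge 1$.

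First, since $\mass\ge M_0$, Lemma \ref{lem;lower;v} gives a time $t_0>0$ such that $v(x,t)\ge K_1 M_0$ on $\Omega\times(t_0,\infty)$ (the non-trivial range of $\chi_0$ forces $a+K_1M_0>0$). I apply Lemma \ref{lem;H<=0} with $\etatilde=K_1M_0$, which is admissible by assumption on $\chi_0$, to produce $p\in(\f n2,n)$, $\ep\in(0,1)$ and $r>0$, all depending only on $\chi_0,a,k,M_0$, such that $H_{\ep,p,r,\chi}(s)\le 0$ for all $s\ge K_1M_0$ and all $\chi\in(0,\chi_0]$. Plugging into Lemma \ref{lem;dif;intupphiq} kills the gradient term in $v$, leaving for $t>t_0$
\[
y'(t)\le -\ep p(p-1)\io u^{p-2}\varphi(v)|\na u|^2 + r\io u^p\varphi(v)\frac{v-u}{(a+v)^k}.
\]

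Next I exploit the monotonicity and boundedness of $\varphi$ on $[K_1M_0,\infty)$: since $\varphi'<0$ and $\varphi(s)\to 1$ at $\infty$, one has $1\le \varphi(v)\le \varphi(K_1M_0)=:\bar\varphi$, and trivially $\f{v-u}{(a+v)^k}\le \f{v}{(a+v)^k}\le \f{1}{(a+K_1M_0)^{k-1}}$. Discarding the helpful $-u$-contribution and writing the diffusion term via $u^{p-2}|\na u|^2=\f{4}{p^2}|\na u^{p/2}|^2$, this yields, with $C_1$ depending only on the stated parameters,
\[
y'(t) \le -\frac{4\ep(p-1)}{p}\io|\na u^{p/2}|^2 + C_1\io u^p \qquad\text{for }t>t_0.
\]

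The final step is to close this inequality using mass conservation $\|u^{p/2}\|_{L^{2/p}(\Omega)}=\mass^{p/2}$. By the Gagliardo–Nirenberg inequality applied to $w:=u^{p/2}$, with $\theta=\f{n(p-1)}{n(p-1)+2}\in(0,1)$, together with Young's inequality, I obtain for any $\mu>0$
\[
\io u^p \le \mu\io|\na u^{p/2}|^2 + C(\mu)\mass^p.
\]
Selecting $\mu$ small enough that $C_1\mu\le \f{2\ep(p-1)}{p}$, rearranging to $\io |\na u^{p/2}|^2\ge \f{1}{\mu}(\io u^p - C(\mu)\mass^p)$ and using $\io u^p\ge y/\bar\varphi$, I arrive at
\[
y'(t) \le -\frac{C_1}{\bar\varphi}\,y(t) + C_2\mass^p \qquad \text{for all }t>t_0,
\]
for some $C_2=C_2(\chi_0,a,k,M_0)$. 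A standard integration then gives $\limsup_{t\to\infty}y(t)\le \f{C_2\bar\varphi}{C_1}\mass^p$, and because $\varphi\ge 1$ one has $\io u^p\le y$, yielding \eqref{ineq;limsup;u^p} with $K_2=(C_2\bar\varphi/C_1)^{1/p}$.

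The main delicate point is ensuring that $p,\ep,r$ (and hence all subsequent constants) depend on $M_0$ only through $\etatilde=K_1M_0$—not on the true mass $\mass$. This is exactly why the lower bound of Lemma \ref{lem;lower;v} is used with $M_0$ (valid since $\mass\ge M_0$) rather than with $\mass$, so that Lemma \ref{lem;H<=0} furnishes a single choice of parameters uniform in $\mass\ge M_0$ and $\chi\le\chi_0$; the role of largeness of $\mass$ will then emerge later, in the fact that the bound $K_2\io u_0$ grows only linearly in $\mass$.
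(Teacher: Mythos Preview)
Your proof is correct and follows essentially the same route as the paper: invoke the eventual lower bound on $v$, tune $p,\ep,r$ via Lemma~\ref{lem;H<=0} with $\etatilde=K_1M_0$, feed this into Lemma~\ref{lem;dif;intupphiq}, and close the resulting differential inequality for $\io u^p\varphi(v)$ by Gagliardo--Nirenberg combined with mass conservation. The only difference is cosmetic: you apply Young's inequality to the Gagliardo--Nirenberg product immediately, obtaining a \emph{linear} ODI $y'\le -cy+C\mass^p$, whereas the paper keeps the superlinear form $y'\le -cy^{1/b}+C\mass^p$; both lead to the same $\limsup$ bound, and your bookkeeping of the two-sided estimate $1\le\varphi(v)\le\bar\varphi$ is in fact the correct orientation.
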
 
\begin{proof}
We first note that, aided by Lemma \ref{lem;lower;v}, 
we can find $t_0 >0$ such that 
\[
 v(x,t) \ge K_1\mass\ge K_1M_0 \quad 
  \mbox{for all} \ x\in  \Omega \ \mbox{and all} \ t>t_0. 
\]
Then combining Lemma \ref{lem;dif;intupphiq} and Lemma \ref{lem;H<=0} and choosing $p=p(χ_0,a,k,K_1M_0)$ and $ε=ε(χ_0,a,k,K_1M_0)$ as in the latter,  
we derive that with $\varphi$ as in \eqref{def;phi} 
\begin{align*}
 \frac d{dt} \io u^p \varphi(v) 
 \le 
 - \ep p (p-1) \io u^{p-2} \varphi (v) |\na u|^2 
 + r\io u^p \varphi (v) \frac{v-u}{(a+v)^k}
\end{align*}
holds for all $t>t_0$. 
From positivity of $u$ and the definition of $r>0$ (in \eqref{def;r}) we obtain that
\begin{align*}
\frac d{dt} \io u^p \varphi(v) 
 \le 
 - \ep p (p-1) \io u^{p-2} \varphi (v) |\na u|^2 
 + C_1 \io u^p \varphi(v)
\end{align*}
for all $t>t_1$ with $C_1:=\frac{(p-1)\chi_0}{2}\sqrt{\frac{p}{1+\ep p -\ep}} \f{m_*}{(a+m_*)^k}$, where $m_*:=\max\{\f{a}{k-1},K_1M_0\}$. 
Here we note that 
\begin{align}\label{ineq;phi}
  C_\varphi := 
  \exp\left\{ 
    -\frac{(p-1)\chi_0}{2(k-1)(a+K_1M_0)^{k-1}}
    \sqrt{\frac{p}{1+\ep p -\ep}} 
  \right\} 
  \le \varphi(s) \le 1 
  \quad \mbox{for all} \ s\ge K_1M_0.    
\end{align}
Thanks to the upper estimate in \eqref{ineq;phi}, 
the Gagliardo--Nirenberg inequality and 
the mass conservation law entail that on $(t_0,\infty)$ 
\begin{align*}
 \io u^p \varphi (v) \le \io u^p 
 = \lp{2}{u^{\frac p2}}^2 
 &\le C_{GN} \Big(\lp{2}{\na u^{\frac p2}}^2+ \lp{\frac 2p}{u^{\frac p2}}^2\Big)^{b} 
 \lp{\frac 2p}{u^{\frac p2}}^{2(1-b)}  
\\
&= 
 C_{GN} \left(
   \io |\na u^{\frac p2}|^2+ \mass^p
\right)^{b} 
 \mass^{p(1-b)} 
\end{align*}
holds with $b:= \frac{(p-1)n}{(p-1)n+2}\in (0,1)$ and some $C_{GN}>0$,  
which means that 
\begin{align*}
 \io u^{p-2} \varphi (v) |\na u|^2 
 \ge \frac{4C_\varphi}{p^2}\io |\na u^{\frac p2}| 
 \ge \frac{4C_\varphi}{p^2}\left( 
  \left( C_{GN} \mass^{p(1-b)}\right)^{-\frac 1b}\left(\io u^p\varphi(v)   \right)^{\frac 1b} -\mass^p 
 \right)
\end{align*}
for all $t>t_0$. 
Therefore we have from Young's inequality that 
\begin{align*}
 \frac d{dt} \io u^p\varphi(v) 
 &\le -C_2 (C_{GN} \mass^{p(1-b)})^{-\frac 1b} 
  \left(\io u^p\varphi(v)\right)^{\frac 1b} + C_2 \mass^p + C_1 \io u^p\varphi(v) 
\\
 &\le -\frac{C_2 (C_{GN} \mass^{p(1-b)})^{-\frac 1b}}{2} 
   \left(\io u^p\varphi(v)\right)^{\frac 1b}
   + C_3 \mass^p 
\end{align*}
for all $t>t_0$, where 
$C_2:= \frac{4\ep (p-1)C_\varphi}{p}$ and 
$C_3:= C_2 + (1-b)(C_1 C_{GN})^{\frac 1{1-b}}(\frac{2b}{C_2})^{\frac b{1-b}}$, 
which with \eqref{ineq;phi} implies that 
\begin{align*}
  \limsup_{t\to \infty} \lp{p}{u\cd} 
  &\le \frac{1}{C_\varphi^{\frac 1p}}
  \left( \limsup_{t\to \infty}  \io u^p\varphi(v) \right)^{\frac{1}{p}}
\\
  &\le \frac{1}{C_\varphi^{\frac 1p}}\left( \frac{2C_3\mass^p}{C_2} 
  \left( C_{GN}\mass^{p(1-b)} \right)^{\frac 1b} \right)^{\frac{b}{p}}
  = 
  \left(\frac{C_{GN}}{C_\varphi}\right)^{\frac{1}{p}} 
  \left( \frac{2C_3}{C_2}\right)^{\frac{b}{p}}\mass
\end{align*} 
because $\frac 1b>1$. Thus, \eqref{ineq;limsup;u^p} holds with 
$K_2:= (\frac{C_{GN}}{C_\varphi})^{\frac{1}{p}} 
  (\frac{2C_3}{C_2})^{\frac{b}{p}}$. 
\end{proof}

Still striving for an asymptotic $L^\infty$-estimate for $uS(v)$, we nevertheless need to obtain additional regularity information on $\nabla v$, since when estimating $u$, we lack control on the crucial term $uS(v)\nabla v$ with our current knowledge. In particular, an $L^{q_0}$-estimate for some $q_0>n$ would suffice for our purpose. Fortunately, the regularity of $\nabla v$ is directly linked to the $L^p$-regularity of $u$, as illustrated by the following result (cf. \cite[Lemma 3.2]{Winkler-Yokota}).

\begin{lem}\label{lem;esti;v}
Let $\mu\ge 1$ and $\lambda \ge 1$ be such that $\lambda < \frac {n\mu}{(n-\mu)_+}$. 
Then there is $K_3 = K_3(\mu,\lambda) >0$ such that whenever 
$(u,v)$ is a global classical solution of \eqref{cp}, for any $S$, $u_0$, $v_0$ as in \eqref{condi;S} and \eqref{condi;ini}, respectively, 
then the inequality 
\begin{align*}
  \limsup_{t\to \infty} \|v\cd \|_{W^{1,\lambda}(\Omega)} 
  \le K_3 \limsup_{t\to\infty} \lp{\mu}{u\cd} 
\end{align*}
holds.
\end{lem}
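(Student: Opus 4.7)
The plan is to use the variation of constants formula for the $v$-equation combined with the standard $L^p$--$L^q$ smoothing estimates for the Neumann heat semigroup $\big(e^{\tau(\Delta-1)}\big)_{\tau\ge 0}$ on $\Omega$. Writing $L := \limsup_{t\to\infty}\lp{\mu}{u\cd}$ and assuming $L<\infty$ (otherwise nothing needs to be shown), I would fix $\varepsilon>0$ arbitrary and pick $t_0>0$ so large that $\lp{\mu}{u\cd s} \le L+\varepsilon$ for all $s\ge t_0$. For $t\ge t_0$, Duhamel's principle applied on $[t_0,t]$ yields
\[
v(\cdot,t) = e^{(t-t_0)(\Delta-1)}v(\cdot,t_0) + \int_{t_0}^t e^{(t-s)(\Delta-1)} u(\cdot,s)\,ds.
\]

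The key inputs are the smoothing estimates
\[
\|e^{\tau(\Delta-1)}\varphi\|_{L^\lambda(\Omega)} \le C_1\bigl(1+\tau^{-\frac{n}{2}(\frac{1}{\mu}-\frac{1}{\lambda})}\bigr)e^{-\tau}\|\varphi\|_{L^\mu(\Omega)}
\]
and
\[
\|\nabla e^{\tau(\Delta-1)}\varphi\|_{L^\lambda(\Omega)} \le C_2\bigl(1+\tau^{-\frac{1}{2}-\frac{n}{2}(\frac{1}{\mu}-\frac{1}{\lambda})}\bigr)e^{-\tau}\|\varphi\|_{L^\mu(\Omega)}
\]
valid for $1\le\mu\le\lambda<\infty$. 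The assumption $\lambda<\frac{n\mu}{(n-\mu)_+}$ is precisely the condition making $\alpha:=\frac{1}{2}+\frac{n}{2}\bigl(\frac{1}{\mu}-\frac{1}{\lambda}\bigr)<1$, so that the singular factor appearing in the gradient estimate is integrable near $0$. Since $v(\cdot,t_0)\in L^\infty(\Omega)$ by Lemma \ref{lem;GE}, the initial-datum piece $e^{(t-t_0)(\Delta-1)} v(\cdot,t_0)$ and its gradient both decay like $e^{-(t-t_0)}$ in $L^\lambda(\Omega)$ as $t\to\infty$, and therefore contribute nothing to the $\limsup$.

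For the convolution piece, the smoothing estimates together with the choice of $t_0$ give, after substituting $\tau=t-s$,
\[
\bigl\|\nabla\!\int_{t_0}^t e^{(t-s)(\Delta-1)} u(\cdot,s)\,ds\bigr\|_{L^\lambda(\Omega)} \le C_2(L+\varepsilon)\int_0^{t-t_0}\!\bigl(1+\tau^{-\alpha}\bigr)e^{-\tau}\,d\tau \le C_2(L+\varepsilon)\int_0^\infty\!\bigl(1+\tau^{-\alpha}\bigr)e^{-\tau}\,d\tau,
\]
and an analogous bound with exponent $\alpha_0:=\frac{n}{2}(\frac{1}{\mu}-\frac{1}{\lambda})<\alpha<1$ holds for $\|v(\cdot,t)\|_{L^\lambda(\Omega)}$. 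The two resulting gamma-type integrals are finite constants depending only on $\mu,\lambda,n$. Taking $\limsup_{t\to\infty}$ and then letting $\varepsilon\downarrow0$ produces the desired inequality with
\[
K_3 := C_1\!\int_0^\infty\!(1+\tau^{-\alpha_0})e^{-\tau}\,d\tau + C_2\!\int_0^\infty\!(1+\tau^{-\alpha})e^{-\tau}\,d\tau.
\]

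The argument is essentially bookkeeping; there is no genuinely hard step. The only point requiring attention is matching the hypothesis $\lambda<\frac{n\mu}{(n-\mu)_+}$ to the integrability condition $\alpha<1$ for the singular kernel of the gradient estimate, which is a direct algebraic check. I would cite the standard references (e.g.\ Winkler's well-known collection of smoothing properties of the Neumann heat semigroup) for the two kernel estimates used here, rather than reproving them.
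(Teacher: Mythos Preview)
Your argument is correct and is precisely the standard semigroup approach that the paper has in mind: the paper does not reprove this lemma but merely cites \cite[Lemma~3.2]{Winkler-Yokota}, whose proof proceeds exactly via the Duhamel representation and the $L^\mu$--$L^\lambda$ smoothing estimates for $(e^{\tau(\Delta-1)})_{\tau\ge 0}$ that you invoke. One inconsequential slip: your appeal to Lemma~\ref{lem;GE} for $v(\cdot,t_0)\in L^\infty(\Omega)$ is misplaced (that lemma carries an extra hypothesis on $\chi$); the correct justification is simply that any classical solution has $v\in C^{2,1}(\overline\Omega\times(0,\infty))$, so $v(\cdot,t_0)\in C(\overline\Omega)$ for every $t_0>0$.
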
 

In light of this result and Lemma \ref{lem;esti;u^p} we can now draw on quite standard smoothing properties of the Neumann heat-semigroup to derive an asymptotic $L^\infty$-estimate for $u$. 

\begin{lem}\label{lem;esti;fracu}
Let $\sigma \in (0,\lambda_1)$, where $\lambda_1>0$ denotes the first nonzero eigenvalue of the Neumann Laplacian in $\Omega$. Let $a\ge 0$, $k>1$ and $M_0\ge0$. 
For all $\chi_0 \in \big( 0,k(a+K_1M_0)^{k-1}\sqrt{\frac 2n} \big)$ 
there are $\theta = \theta (\chi_0,a,k,M_0)>n$ and $\alpha = \alpha (\chi_0,a,k,M_0) <1$ as well as $K_4>0$ and $K_5>0$ such that the following holds:
If $(u,v)$ is a global classical solution of \eqref{cp} with 
$\chi\le \chi_0$ and $(u_0,v_0)$ satisfying \eqref{condi;ini} and $\io u_0=\mass\ge M_0$, 
then 
\begin{align*}
 \limsup_{t\to \infty} \lp{\theta}{A^\alpha u \cd} 
 \le K_4 \io u_0,   
\end{align*}
where $A$ denotes the sectorial realization of $-\Delta + \sigma$ 
in $L^\theta (\Omega)$ under homogeneous Neumann boundary conditions;
moreover, 
\begin{align*}
\limsup_{t\to \infty} \lp{\infty}{u \cd}\le \f{K_5}{2}\io u_0.
\end{align*}
\end{lem}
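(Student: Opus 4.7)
The plan is to exploit the variation-of-constants formula based at a large time $t_0>0$, namely
\[
u(t)=e^{-(t-t_0)A}u(t_0)+\int_{t_0}^t e^{-(t-s)A}\bigl[\sigma u(s)-\chi\nabla\cdot\bigl(uS(v)\nabla v\bigr)(s)\bigr]\,ds,
\]
with $A=-\Delta+\sigma$ sectorial on $L^q(\Omega)$ under Neumann conditions, combined with the standard smoothing estimates
\[
\|A^\alpha e^{-\tau A}\varphi\|_{L^\theta}\le C\tau^{-\alpha-\tfrac{n}{2}(\tfrac{1}{q}-\tfrac{1}{\theta})}e^{-\sigma'\tau}\|\varphi\|_{L^q},
\]
\[
\|A^\alpha e^{-\tau A}\nabla\cdot\psi\|_{L^\theta}\le C\tau^{-\alpha-\tfrac{1}{2}-\tfrac{n}{2}(\tfrac{1}{q}-\tfrac{1}{\theta})}e^{-\sigma'\tau}\|\psi\|_{L^q}
\]
valid for any $\sigma'\in(0,\sigma)$. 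For $t_0$ sufficiently large, Lemma \ref{lem;esti;u^p} gives $\|u(t)\|_{L^p}\le 2K_2M$ with some $p>\tfrac{n}{2}$, Lemma \ref{lem;esti;v} (applied with $\mu=p$) yields $\|\nabla v(t)\|_{L^\lambda}\le 2K_3K_2M$ for any chosen $\lambda\in\bigl(n,\tfrac{np}{(n-p)_+}\bigr)$, and Lemma \ref{lem;lower;v} supplies $v(x,t)\ge K_1M$, all for $t\ge t_0$. Combining these, $S(v(t))\le(a+K_1M)^{-k}\le(K_1M)^{-k}$ and Hölder's inequality provide
\[
\|u(t)S(v(t))\nabla v(t)\|_{L^r}\le CM^2\cdot(K_1M)^{-k}=CM^{2-k},\qquad r^{-1}=p^{-1}+\lambda^{-1};
\]
since $k>1$ and $M\ge M_0$, the right-hand side is bounded by $C(M_0)\cdot M$, i.e.\ only linearly in $M$. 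This $M^{-k}$-compensation between the linear growth of $\|u\|\|\nabla v\|$ and the pointwise smallness of $S(v)$ is the structural feature that keeps the constants affine in $M$.

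To turn this into an $A^\alpha$-bound in $L^\theta$, the divergence-term smoothing estimate demands time-integrability near $\tau=0$, i.e.\ $\alpha<\tfrac{1}{2}-\tfrac{n}{2r}+\tfrac{n}{2\theta}$, while the Sobolev-type embedding $D(A^\alpha)\hookrightarrow L^\infty(\Omega)$ requires $\alpha>\tfrac{n}{2\theta}$; compatibility of these two forces $r>n$. Since the $r$ computed above may be $<n$ when $p$ lies close to $\tfrac{n}{2}$, I would first insert a short bootstrap: apply the same Duhamel identity with $\alpha=0$ and target exponent $p_{i+1}$ slightly below $\tfrac{np_i}{2(n-p_i)}$ (just small enough to keep the divergence-term exponent integrable), use the $M^{-k}$-cancellation above to secure a bound $\|u(t)\|_{L^{p_{i+1}}}\le \tilde C_{i+1}(M_0)M$ asymptotically, and then reapply Lemma \ref{lem;esti;v} to upgrade the integrability of $\nabla v$. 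Elementary computation shows that the deficit $p_i-\tfrac{n}{2}$ at least doubles at each step, so after finitely many iterations $p_i>n$, and Lemma \ref{lem;esti;v} then yields $\limsup_{t\to\infty}\|\nabla v(t)\|_{L^\infty}\le CM$, whence $\|u(t)S(v(t))\nabla v(t)\|_{L^\theta}\le C(M_0)M$ for any $\theta\ge p_i$.

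Picking now $\theta>n$ large and $\alpha\in\bigl(\tfrac{n}{2\theta},\,\min\{1,\tfrac{1}{2}-\tfrac{n}{2p_i}+\tfrac{n}{2\theta}\}\bigr)$, which is nonempty since $p_i>n$, the smoothing estimates inserted into the Duhamel formula produce $\limsup_{t\to\infty}\|A^\alpha u(\cdot,t)\|_{L^\theta}\le K_4M$; the initial-data contribution $A^\alpha e^{-(t-t_0)A}u(t_0)$ vanishes as $t\to\infty$ thanks to the factor $e^{-\sigma'(t-t_0)}$. The $L^\infty$-estimate is then immediate from $D(A^\alpha)\hookrightarrow L^\infty(\Omega)$, and adjusting $K_5$ finishes the proof. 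The principal obstacle is book-keeping: one has to verify that at every step of the bootstrap the newly produced constant is still affine in $M$ (and otherwise only depends on $M_0,a,k,\chi_0$), which is feasible because the $M^{-k}$-decay of $S(v)$ continues to dominate the (linear-in-$M$) product $\|u\|_{L^{p_i}}\|\nabla v\|_{L^{\lambda_i}}$ at every iteration.
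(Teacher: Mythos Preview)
Your plan is sound and reaches the same conclusion, but by a genuinely different route. Rather than bootstrapping the integrability of $u$ through a finite chain $p_0<p_1<\dots<p_N$ with $p_N>n$, the paper closes the estimate in a single step: since $p>\tfrac{n}{2}$ already forces $q\in\bigl(n,\tfrac{np}{(n-p)_+}\bigr)$ in Lemma~\ref{lem;esti;v}, one may pick $\theta\in(n,q)$ directly and control the drift via the interpolation
\[
\|uS(v)\nabla v\|_{L^\theta}\le (a+K_1M)^{-k}\,\|u\|_{L^1}^{1-c}\,\|u\|_{L^\infty}^{c}\,\|\nabla v\|_{L^q},\qquad c\in(0,1),
\]
where $\|u\|_{L^\infty}$ is itself bounded by $C_E\|A^\alpha u\|_{L^\theta}$ through the embedding $D(A^\alpha)\hookrightarrow L^\infty(\Omega)$. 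This makes the Duhamel inequality self-referencing with a sublinear power $Z^c$ of $Z(T)=\sup_{(t_0,T]}(1+(t-t_0)^{-\beta})^{-1}\|A^\alpha u\|_{L^\theta}$, and a single application of Young's inequality absorbs it. The paper's argument is shorter and avoids any iteration; your bootstrap is more pedestrian but entirely self-contained and does not rely on the absorption trick.

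One small gap: you simplify $S(v)\le(a+K_1M)^{-k}$ to $(K_1M)^{-k}$, dropping $a$. That is harmless if $M_0>0$, but the lemma allows $M_0=0$ (which then forces $a>0$ for the hypothesis on $\chi_0$ to be nonvacuous), and in that regime $M^{2-k}$ cannot be bounded by $C(M_0)\,M$ uniformly over $M\ge 0$. Keep the sharper bound $(a+K_1M)^{-k}$ throughout; then $\sup_{M\ge M_0}\frac{M}{(a+K_1M)^k}$ is finite in both cases (attained at $M=\frac{a}{K_1(k-1)}$ when $a>0$), which is precisely how the paper packages the constant $K_4$ via $m_*=\max\{\frac{a}{K_1(k-1)},M_0\}$.
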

\begin{proof}
From Lemmata \ref{lem;lower;v}, \ref{lem;esti;u^p} and \ref{lem;esti;v} 
we can find $p>\frac n2$, $q\in (n, \frac{np}{(n-p)_+})$ and 
$t_0>0$ such that 
\begin{align}\label{p10-vlower}
 v(x,t) \ge K_1\io u_0  
\quad 
 \mbox{for all} \ 
 x\in \Omega \ \mbox{and all} \ t>t_0 
\end{align}
and such that 
\begin{align}\label{p10estimates2}
 \lp{p}{u\cd } \le 2K_2 \io u_0, 
 \quad \|v\cd\|_{W^{1,q}(\Omega)} \le 2K_2K_3 \io u_0
 \quad \mbox{for all} \ t>t_0  
\end{align}
hold. Now for $\theta \in (n,q)$ we let $\alpha \in \big( \frac{n}{2\theta},\min\{1-\frac n2(\frac 1p -\frac 1\theta), \frac 12\} \big)$ and put 
$\beta := \alpha + \frac n2 (\frac 1p - \frac 1\theta)<1$, 
$\gamma := \frac 12 + \alpha <1$, and moreover, 
for $T> t_0$ we define
\[
 \wasM (T) := \sup_{t\in (t_0,T]}\left(1+ (t-t_0)^{-\beta}\right)^{-1} 
 \lp{\theta}{A^\alpha u\cd}. 
\] 
Aided by the variation-of-constants representation of $u$, 
we have that, due to \eqref{p10-vlower} and \eqref{p10estimates2},
\begin{align*}
 \lp{\theta}{A^\alpha u\cd} 
 \le 
 \left\|A^\alpha e^{(t-t_0)\Delta}
 u(\cdot,t_0) \right\|_{L^\theta(\Omega)} 
 + \chi \int_{t_0}^t 
   \left\| A^\alpha e^{(t-s)\Delta}\na\cdot 
   (uS(v)\na v) (\cdot,s) \right\|_{L^\theta(\Omega)}\, ds 
\end{align*}
holds for all $t>t_0$. 
Here we note from the continuous embedding $D(A^\alpha) \hookrightarrow L^\infty (\Omega)$ (see \cite[Theorem 1.6.1]{Henry_1981}) 
\begin{align}\label{ineq;fracandinfty} 
 \lp{\infty}{\varphi} \le C_E \lp{\theta}{A^\alpha \varphi}
 \quad \mbox{for all} \ \varphi\in D(A^\alpha) 
\end{align}
with some $C_E>0$ that for all $s>t_0$
\begin{align*}
 \lpb{\theta}{(uS(v)\na v) (\cdot,s)}
 &\le 
 \frac{1}{(a+K_1\io u_0)^k}
 \lp{1}{u(\cdot,s)}^{1-c}
 \lp{\infty}{u(\cdot,s)}^{c} \lp{q}{\na v(\cdot,s)}
\\
 &\le 
 \frac{2C_E K_2 K_3 \io u_0}{(a+K_1\io u_0)^k} 
 \left(\io u_0\right)^{1-c}\wasM^c(T) 
 \left( 1+(s-t_0)^{-\beta} \right)^c
\\
 & \le 2C_E K_2 K_3 \f{M^{2-c}}{(a+K_1M)^k} \wasM^c(T) \left( 1+(s-t_0)^{-\beta} \right)^c
\end{align*} 
with $c:=1-\frac{q-\theta}{q\theta}\in (0,1)$. 

Noting from our choice of $\sigma$ and the known smoothing properties of the Neumann heat semigroup (see \cite[Theorem 1.4.3]{Henry_1981} and \cite[Lemma 1.3 (iv)]{win_aggregationvs}) that 
\begin{align*}
 \left\|A^\alpha e^{(t-t_0)\Delta}u(\cdot,t_0) \right\|_{L^\theta(\Omega)} 
 &\le C_{S_1} \left( 1+ (t-t_0)^{-\beta} \right) \lp{p}{u(\cdot,t_0)} 
 \\
 &\le 2K_2 C_{S_1} \left( 1+ (t-t_0)^{-\beta} \right)\io u_0  
\end{align*}
and 
\begin{align*}
 &\chi  \left\| A^\alpha e^{(t-s)\Delta}\na\cdot 
 ( uS(v)\na v)(\cdot,s) \right\|_{L^\theta(\Omega)}
 \\ 
 &\qquad \le 
  C_{S_2} \chi_0 (t-s)^{-\gamma} e^{-\lambda (t-s)} 
  \lp{\theta}{(uS(v)\na v) (\cdot,s)} 
\\  
 &\qquad \le  C_1
 \f{M^{2-c}}{(a+K_1M)^k} \wasM^c(T) 
(t-s)^{-\gamma} e^{-\lambda (t-s)} 
 \left( 1+(t-t_0)^{-\beta} \right)^c
\end{align*}
with some $C_{S_1},C_{S_2},\lambda>0$ and $C_1:=2C_E C_{S_2} K_2 K_3 \chi_0$,  
we have from the inequality 
\[
 \int_{t_0}^t (t-s)^{-\gamma}e^{-\lambda (t-s)}
 \left( 1+(s-t_0)^{-\beta} \right)^c\, ds 
 \le L \left( 1+ (t-t_0)^{-\beta} \right) \quad \mbox{for all} \ t> t_0 
\]
with some $L=L(\beta,\gamma,\lambda,c)>0$, obtained in \cite[Lemma 3.5]{Winkler-Yokota}, that
\begin{align*}
  \left(1+(t-t_0)^{-\beta}\right)^{-1}\lp{\theta}{A^\alpha u\cd} \le 
   2K_2 C_{S_1} \io u_0 +  C_1 L \f{M^{2-c}}{(a+K_1M)^k} \wasM^c(T). 
\end{align*} 
This together with Young's inequality 
\[
 C_1L \left(\io u_0\right)^{1-c} \wasM^c(T) \le 
 (1-c)(2c)^{\frac{c}{1-c}} \left(C_1L \f{M^{2-c}}{(a+K_1M)^k}\right)^{\frac 1{1-c}} + 
 \frac{1}{2} \wasM(T)  
\]
enables us to see that 
\[
 \wasM(T) \le C_2 
\left(
1+\left(\f{M}{(a+K_1M)^k}\right)^{\f{1}{1-c}}
\right)
\io u_0 \quad \mbox{for all} \ T> t_0
\]
with $C_2:= 2\max\{2 K_2 C_{S_1}, (1-c)(2c)^{\frac{c}{1-c}} (C_1L)^{\frac 1{1-c}})\}$. 
Therefore we attain that 
\begin{align*}
  \lp{\theta}{A^\alpha u\cd} 
  &\le \left(1+(t-t_0)^{-\beta}\right) \wasM(T) \\
  &\le 
C_3\left(1+\left(\f{M}{(a+K_1M)^{k}}\right)^{\f1{1-c}}\right)
 \io u_0 \quad \mbox{for all} \ t> t_0+1 
\end{align*}
with $C_3 := 2C_2$. Here, we finally set $K_4:=C_3(\f{m_*}{(a+K_1m_*)^k})^{\f{1}{1-c}}$, where $m_*=\max\{\f{a}{K_1(k-1)},M_0\}$. 
 To verify the second assertion, we make use of the first part of the lemma, to find that there exists some $t_1>0$ such that 
\begin{align*}
\lp{\theta}{A^\alpha u\cd}\leq 2K_4\io u_0
\end{align*}
is valid for all $t>t_1$. Then, we employ \eqref{ineq;fracandinfty} to find that
\begin{align*}
\lp{\infty}{u\cd}\leq 2C_E K_4 \io u_0\quad\text{for all}\ t>t_1
\end{align*}
which, by choice of $K_5:=4C_E K_4$, completes the proof. 
\end{proof}

We can now establish an asymptotic universal bound on $uS(v)$, which will be a key point in the proof of Theorem \ref{mainthm}.  

\begin{lem}\label{lem;esti;uSv}
Let $a\ge 0$, $k>1$ and $M_0\ge0$. 
For all $\chi_0 \in \big(0,k (a+K_1M_0)^{k-1}\sqrt{\frac 2n}\big)$ and with $K_5=K_5(\chi_0,a,k,M_0)$ as in the previous lemma, the following holds:
Whenever $(u,v)$ is a global classical solution of \eqref{cp} 
with $\chi\le \chi_0$ and $(u_0,v_0)$ satisfying \eqref{condi;ini} and $\io u_0=\mass\ge M_0$, 
we have 
\begin{align}\label{ineq;uSv}
 \limsup_{t\to \infty} \lp{\infty}{u\cd S(v\cd )} \le K_5 \f{M}{(a+K_1M)^k}.
\end{align} 
\end{lem}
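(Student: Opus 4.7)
The plan is to combine the two pieces of information we already have at our disposal: the pointwise-in-space asymptotic lower bound for $v$ from Lemma \ref{lem;lower;v} and the asymptotic $L^\infty$ bound for $u$ from the second assertion of Lemma \ref{lem;esti;fracu}. Since $S(s)\le(a+s)^{-k}$ is decreasing in $s$, the former turns into a uniform upper bound for $S(v(\cdot,t))$, whereupon the product estimate follows by a simple Hölder-type splitting of the $L^\infty$-norm.

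Concretely, I would first invoke Lemma \ref{lem;lower;v} to pick $t_1>0$ so large that
\[
v(x,t)\ge K_1 M\qquad\text{for all }x\in\Omega\text{ and all }t>t_1,
\]
which is possible because the liminf-inf of $v$ is bounded below by $2K_1 M$. Monotonicity of $s\mapsto(a+s)^{-k}$ together with \eqref{condi;S} then gives
\[
0\le S(v(x,t))\le\frac{1}{(a+v(x,t))^k}\le\frac{1}{(a+K_1 M)^k}\qquad\text{for all }x\in\Omega,\ t>t_1.
\]
At the same time, Lemma \ref{lem;esti;fracu} provides $t_2>0$ with
\[
\lp{\infty}{u\cd}\le\frac{K_5}{2} M\qquad\text{for all }t>t_2.
\]

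Combining the two, for every $t>\max\{t_1,t_2\}$ we have
\[
\lp{\infty}{u\cd S(v\cd)}\le\lp{\infty}{u\cd}\cdot\frac{1}{(a+K_1 M)^k}\le\frac{K_5}{2}\cdot\frac{M}{(a+K_1 M)^k},
\]
and taking $\limsup_{t\to\infty}$ yields \eqref{ineq;uSv} (the factor $\tfrac12$ providing a comfortable margin). There is no real obstacle here; the lemma is essentially a bookkeeping corollary of Lemmata \ref{lem;lower;v} and \ref{lem;esti;fracu}, and the only thing to be careful about is that the factor of $2$ in the statement of Lemma \ref{lem;lower;v} (liminf bounded by $2K_1 M$) is precisely what lets us pass, for $t$ large, to the slightly smaller threshold $K_1 M$ on which the hypothesis $S(s)\le(a+s)^{-k}$ is evaluated.
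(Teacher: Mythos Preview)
Your proof is correct and follows essentially the same route as the paper: invoke Lemma~\ref{lem;lower;v} for an eventual pointwise lower bound on $v$, invoke the second part of Lemma~\ref{lem;esti;fracu} for an eventual $L^\infty$ bound on $u$, and combine via $S(s)\le(a+s)^{-k}$. One very minor technical point: from the statement $\limsup_{t\to\infty}\lp{\infty}{u\cd}\le\frac{K_5}{2}M$ alone you cannot strictly deduce the existence of $t_2$ with $\lp{\infty}{u\cd}\le\frac{K_5}{2}M$ for all $t>t_2$, only that this holds with $\frac{K_5}{2}M$ replaced by any strictly larger number. The paper accordingly uses $K_5 M$ at this step, which is exactly why the constant in \eqref{ineq;uSv} is $K_5$ rather than $\frac{K_5}{2}$; your ``comfortable margin'' observation is precisely what makes this harmless.
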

\begin{proof}
Thanks to Lemmata \ref{lem;lower;v} and \ref{lem;esti;fracu}, 
we can find $t_0>0$ such that 
\begin{align*}
v(x,t) \ge K_1 \io u_0 
\quad \mbox{for all} \ x\in \Omega, \ t>t_0
\end{align*} 
and such that
\begin{align*}
\lp{\infty}{u\cd} \le K_5\io u_0 
\quad \mbox{for all} \ t> t_0. 
\end{align*}
Hence, we immediately obtain that for all $t>t_0$, 
\begin{equation*}
 \lp{\infty}{u\cd S(v\cd )} 
 \le  
 \frac{K_5\io u_0}{(a+K_1\io u_0)^k}. \qedhere 
\end{equation*}
\end{proof}

\section{An asymptotic estimate for the Lyapunov functional}\label{sec;conv}
As the existence part of the main theorem is covered by the previous section, we will now turn our attention to verifying the desired convergence result. Inspired by the approach undertaken in \cite{Winkler-Yokota}, we will consider the functional
\begin{align}\label{LyapFunc}
{\cal{F}}(u,v)(t):= \io (u (\cdot,t) - \ol{u_0})^2 + K \io (v(\cdot,t)-\ol{u_0})^2
\end{align}
with some $K>0$, which, at least for later times, acts as a Lyapunov functional to the system under consideration. We start by establishing a differential inequality for the first solution component. 

\begin{lem}\label{lem;asy;energy;u}
Let $a\ge 0$, $k>1$, $M_0\ge0$ and $\chi_0 \in \big(0,k(a+K_1M_0)^{k-1}\sqrt{\frac 2n}\big)$. 
Then every global classical solution $(u,v)$ of \eqref{cp} with some $S$ as in \eqref{condi;S}, 
$\chi \le \chi_0$ and $(u_0,v_0)$ fulfilling \eqref{condi;ini} and $\io u_0=\mass\ge M_0$  
satisfies 
\begin{align*}
 \frac d{dt} \io (u-\ol{u_0})^2 + \io |\na u|^2 
 \le 4K_5^2 \f{M^2}{(a+K_1M)^{2k}} \chi^2 \io |\na v|^2 
 \quad \text{ on } (t^\ast,\infty)
\end{align*}  
for some $t^\ast >0$, with $K_5=K_5(\chi_0,a,k,M_0)$ provided by Lemma \ref{lem;esti;fracu}. 
\end{lem}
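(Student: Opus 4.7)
The strategy is a routine $L^2$-testing of the first equation in \eqref{cp}. I would multiply the $u$-equation by $2(u-\ol{u_0})$ and integrate over $\Om$. Since Lemma~\ref{lem;GE} yields mass conservation, $\ol{u(\cdot,t)}=\ol{u_0}$ for all $t>0$, hence
\[
\frac d{dt}\io(u-\ol{u_0})^2 = 2\io(u-\ol{u_0})\big(\Delta u-\chi\na\cdot(uS(v)\na v)\big).
\]
Integration by parts, combined with the Neumann condition and the identity $\na(u-\ol{u_0})=\na u$, turns this into
\[
\frac d{dt}\io(u-\ol{u_0})^2 = -2\io|\na u|^2 + 2\chi\io uS(v)\,\na u\cdot\na v.
\]

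Next, I would apply Young's inequality in the form $2\chi uS(v)\,\na u\cdot\na v \le |\na u|^2 + \chi^2 (uS(v))^2 |\na v|^2$ to absorb one copy of the dissipation $\io|\na u|^2$, obtaining
\[
\frac d{dt}\io(u-\ol{u_0})^2 + \io|\na u|^2 \le \chi^2\io (uS(v))^2 |\na v|^2.
\]

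The final step relies on Lemma~\ref{lem;esti;uSv}, which provides $\limsup_{t\to\infty}\lp{\infty}{u\cd S(v\cd)} \le K_5\tfrac{M}{(a+K_1M)^k}$. Picking $t^\ast>0$ large enough, the pointwise bound $\lp{\infty}{u\cd S(v\cd)}\le 2K_5\tfrac{M}{(a+K_1M)^k}$ holds on $(t^\ast,\infty)$; squaring and pulling this constant out of the remaining integral converts the previous display into the asserted inequality with the constant $4K_5^2\tfrac{M^2}{(a+K_1M)^{2k}}\chi^2$ in front of $\io|\na v|^2$.

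There is no real obstacle here: every ingredient has been prepared in Section~\ref{sec;AE}, the computation is an elementary energy identity, and the only subtle point---passing from the \emph{limsup}-bound of Lemma~\ref{lem;esti;uSv} to a \emph{pointwise} bound valid for all sufficiently large $t$---is precisely what is absorbed by the freedom to enlarge $K_5$ to $2K_5$ and by the choice of $t^\ast$.
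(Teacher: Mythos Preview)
Your proposal is correct and follows essentially the same route as the paper: test the $u$-equation by a multiple of $u-\ol{u_0}$, integrate by parts, apply Young's inequality to split the cross term, and then invoke the asymptotic bound on $\lp{\infty}{uS(v)}$ from Lemma~\ref{lem;esti;uSv} (doubled to $2K_5\tfrac{M}{(a+K_1M)^k}$ on $(t^\ast,\infty)$). The only cosmetic difference is that the paper multiplies by $\tfrac12(u-\ol{u_0})$ and then doubles at the end, whereas you multiply by $2(u-\ol{u_0})$ directly.
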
 
\begin{proof} 
From Lemma \ref{lem;esti;uSv} there is $t_0>0$ such that 
\begin{align*}
 u\cd S(v\cd ) \le 2 K_5 \f{M}{(a+K_1M)^k}
 \quad \mbox{for all} \ t>t_0.  
\end{align*}
Then testing the first equation of \eqref{cp} by 
$\frac 12 (u-u_0)$  and using integration by parts 
show  
\begin{align}\label{dif;energy;u}
 \frac 12 \frac d{dt} \io (u-u_0)^2 
 = -\io |\na u|^2 
   + \chi \io uS(v)\na u \cdot \na v 
   \quad \mbox{for all} \ t>t_0.  
\end{align}
Here we use Young's inequality to see that 
\begin{align}\notag \label{dif;energy;u;2}
 \chi \io uS(v) \na u \cdot \na v 
 &\le \frac 12 \io |\na u|^2 + \frac{\chi^2}{2} \io |uS(v)|^2|\na v|^2
\\
 & \le \frac 12 \io |\na u|^2 + 2\chi^2K_5^2 \f{M^2}{(a+K_1M)^{2k}} \io |\na v|^2 
\end{align}
ia valid for all $t>t_0$. 
Therefore a combination of \eqref{dif;energy;u} and 
\eqref{dif;energy;u;2} directly implies this lemma. 
\end{proof}

In the next lemma we will investigate the time-evolution of the second part of the Lyapunov-functional. 

\begin{lem}\label{lem;asy;energy;v} 
Every global classical solution $(u,v)$ of \eqref{cp}, for any choice of initial data permitted by \eqref{condi;ini} and $S$ as in \eqref{condi;S}, satisfies 
\begin{align*}
 \frac d{dt} \io (v-\ol{u_0})^2 
 + 2\io |\na v|^2 + \io (v-\ol{u_0})^2 
 \le \io (u-\ol{u_0})^2 
 \quad \mbox{for all} \ t>0. 
\end{align*}
\end{lem}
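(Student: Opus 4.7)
The plan is to test the second PDE in \eqref{cp} against $v-\ol{u_0}$ and integrate over $\Omega$; this is the natural analogue of the testing used for the $u$-equation in Lemma \ref{lem;asy;energy;u}, but since the $v$-equation is fully linear, no qualitative estimates from Section \ref{sec;AE} enter at all, and the statement holds for every global classical solution without any restriction on $\chi$, $a$, $k$ or the initial mass.

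Concretely, I would multiply $v_t = \Delta v - v + u$ by $v-\ol{u_0}$, integrate over $\Omega$, and use the Neumann boundary condition $\pa_\nu v = 0$ to obtain
\[
\frac12 \frac d{dt} \io (v-\ol{u_0})^2
= -\io |\na v|^2 \;-\; \io (v-\ol{u_0})\,v \;+\; \io (v-\ol{u_0})\,u
\]
on $(0,\infty)$. The key algebraic manipulation is then to shift $v$ and $u$ by the constant $\ol{u_0}$ inside the zero-order terms: write $v = (v-\ol{u_0}) + \ol{u_0}$ in the second integral and $u = (u-\ol{u_0}) + \ol{u_0}$ in the third. By mass conservation (Lemma \ref{lem;GE}) we have $\io (u-\ol{u_0}) = 0$, so the two contributions of the constant $\ol{u_0}\io (v-\ol{u_0})$ cancel against each other, leaving
\[
\frac12 \frac d{dt} \io (v-\ol{u_0})^2
= -\io |\na v|^2 \;-\; \io (v-\ol{u_0})^2 \;+\; \io (v-\ol{u_0})(u-\ol{u_0}).
\]

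To conclude, I would apply Young's inequality to the cross term in the form $\io (v-\ol{u_0})(u-\ol{u_0}) \le \tfrac12 \io (v-\ol{u_0})^2 + \tfrac12 \io (u-\ol{u_0})^2$, absorb half of $\io (v-\ol{u_0})^2$ into the negative zero-order term on the left, and multiply the resulting inequality by $2$. This yields exactly
\[
\frac d{dt} \io (v-\ol{u_0})^2 + 2\io |\na v|^2 + \io (v-\ol{u_0})^2 \le \io (u-\ol{u_0})^2.
\]

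There is no real obstacle here: the argument is a direct energy computation and the only subtlety worth stating cleanly is the cancellation that relies on the mass conservation identity $\io u\cd = \io u_0$, which is why $\ol{u_0}$ (as opposed to, say, $\ol{v\cd}$) is the correct constant around which to expand in order to couple smoothly with Lemma \ref{lem;asy;energy;u} when the two inequalities are later combined into the Lyapunov functional \eqref{LyapFunc}.
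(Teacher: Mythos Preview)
Your proof is correct and is essentially identical to the paper's: test the $v$-equation against $v-\ol{u_0}$, integrate by parts, and apply Young's inequality to the cross term $\io (u-\ol{u_0})(v-\ol{u_0})$. One minor remark: the cancellation of the two $\ol{u_0}\io(v-\ol{u_0})$ contributions is purely algebraic (since $-v+u = -(v-\ol{u_0})+(u-\ol{u_0})$), so mass conservation is not actually invoked at this step.
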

\begin{proof}
 Testing the second equation of \eqref{cp} by $v-\ol{u_0}$, 
 we have from the Young's inequality that 
 \begin{align*}
  \frac 12\frac d{dt} \io (v-\ol{u_0})^2 
  &= -\io |\na v|^2 - \io (v-\ol{u_0})^2 + \io (u-\ol{u_0})(v-\ol{u_0})
\\
  &\le -\io |\na v|^2 -\frac 12 \io (v-\ol{u_0})^2 + \frac 12 \io (u-\ol{u_0})^2
 \end{align*}
holds for all $t>0$. 
\end{proof}

Combining the previous two lemmata, for suitable choice of $K>0$, we can make use of \eqref{LyapFunc} to obtain convergence of solutions towards the spatial mean of $u_0$ in $L^2(\Omega)$ with an exponential rate.

\begin{lem}\label{lem;asy;energy}
Let $a\ge 0$, $k>1$, $M_0\ge 0$ and $\chi_0\in \big(0,k(a+K_1M_0)^{k-1}\sqrt{\frac 2n}\big)$. 
 Then there exists $\delta=\delta (\chi_0,a,k,M_0) >0$ such that, if $\chi\le \chi_0$ and $M\ge M_0$ satisfy 
 \[ \f{M}{(a+K_1M)^k}\chi < \delta,\]
then for every global solution $(u,v)$ of \eqref{cp} with $S$, $u_0$, $v_0$ as in \eqref{condi;S} and \eqref{condi;ini} and with $\io u_0=M$ there are $K_6, \ell>0$ and $t^\ast>0$ such that 
\begin{align}\label{ineq;L2;conv}
 \lp{2}{u(\cdot,t)-\ol{u_0}} + \lp{2}{v(\cdot,t)-\ol{u_0}} \le K_6 e^{-\ell t}
 \quad \mbox{for all} \ t> t^\ast. 
\end{align}
\end{lem}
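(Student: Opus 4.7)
The strategy is the standard Lyapunov argument: combine the two dissipation-type estimates from Lemmas \ref{lem;asy;energy;u} and \ref{lem;asy;energy;v} with appropriate weight, use mass conservation together with the Poincaré inequality to convert the $\|\na u\|_{L^2}$ contribution into control on $\|u-\ol{u_0}\|_{L^2}$, and pick $K$ and $\delta$ so that every bad term can be absorbed. Throughout, write $A(M,\chi):=4K_5^2 \tfrac{M^2}{(a+K_1 M)^{2k}}\chi^2$, which is the prefactor appearing in Lemma \ref{lem;asy;energy;u}.

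\textbf{Step 1: Adding the ODIs.} Multiply the inequality of Lemma \ref{lem;asy;energy;v} by $K>0$ (to be fixed) and add it to that of Lemma \ref{lem;asy;energy;u}. On $(t^\ast,\infty)$ (with $t^\ast$ from Lemma \ref{lem;asy;energy;u}) this yields
\begin{align*}
 \frac{d}{dt}\mathcal{F}(u,v) \le -\io |\na u|^2 - (2K - A(M,\chi))\io|\na v|^2 - K\io (v-\ol{u_0})^2 + K\io(u-\ol{u_0})^2.
\end{align*}

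\textbf{Step 2: Poincaré with mass conservation.} The mass conservation law from Lemma \ref{lem;GE} gives $\ol{u(\cdot,t)}=\ol{u_0}$, so the Neumann Poincaré inequality applies to $u-\ol{u_0}$ and produces $\io|\na u|^2 \ge \lambda_1\io(u-\ol{u_0})^2$. Plugging this in,
\begin{align*}
 \frac{d}{dt}\mathcal{F}(u,v) \le -(\lambda_1-K)\io(u-\ol{u_0})^2 - (2K - A(M,\chi))\io|\na v|^2 - K\io(v-\ol{u_0})^2.
\end{align*}

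\textbf{Step 3: Choice of $K$ and $\delta$.} Fix $K:=\lambda_1/2$, so $\lambda_1 - K = \lambda_1/2 > 0$, and define $\delta:=\delta(\chi_0,a,k,M_0) := \tfrac12\sqrt{\lambda_1}/K_5 > 0$. Under the hypothesis $\tfrac{M}{(a+K_1 M)^k}\chi < \delta$ one gets $A(M,\chi) < \lambda_1 = 2K$, so the $|\na v|^2$ term drops. With $\gamma := \min\{\lambda_1/2,\,1\} > 0$, the previous line becomes
\begin{align*}
 \frac{d}{dt}\mathcal{F}(u,v) \le -\gamma\,\io(u-\ol{u_0})^2 - \gamma K\io(v-\ol{u_0})^2 = -\gamma\,\mathcal{F}(u,v) \quad \text{on }(t^\ast,\infty).
\end{align*}

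\textbf{Step 4: Exponential decay.} A Grönwall argument yields $\mathcal{F}(u,v)(t)\le \mathcal{F}(u,v)(t^\ast)e^{-\gamma(t-t^\ast)}$ for all $t>t^\ast$. Since $\lp{2}{u\cd-\ol{u_0}}^2\le\mathcal{F}(u,v)(t)$ and $\lp{2}{v\cd-\ol{u_0}}^2\le K^{-1}\mathcal{F}(u,v)(t)$, taking square roots and setting $\ell:=\gamma/2$, $K_6 := (1+K^{-1/2})\sqrt{\mathcal{F}(u,v)(t^\ast)}\,e^{\gamma t^\ast/2}$ gives \eqref{ineq;L2;conv}. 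The only subtle point is the balancing in Step~3: the weight $K$ must simultaneously be small enough that the Poincaré bound dominates the cross term $K\io(u-\ol{u_0})^2$, and large enough that $2K$ can absorb $A(M,\chi)$; the smallness condition on $\chi M/(a+K_1 M)^k$ is exactly what secures both constraints.
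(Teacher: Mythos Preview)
Your proof is correct and follows essentially the same route as the paper's own argument: combine Lemmata \ref{lem;asy;energy;u} and \ref{lem;asy;energy;v} with weight $K$, convert $\io|\na u|^2$ into control on $\io(u-\ol{u_0})^2$ via mass conservation and Poincar\'e, and choose $K$ and $\delta$ so that both sign conditions hold. The only cosmetic differences are that the paper writes the Poincar\'e constant as $C_P$ (your $1/\lambda_1$), picks $K$ from the interval $\big[2K_5^2\tfrac{M^2}{(a+K_1M)^{2k}}\chi^2,\tfrac{1}{C_P}\big)$ rather than fixing $K=\lambda_1/2$, and takes $\delta=\tfrac{1}{K_5}\sqrt{\tfrac{1}{2C_P}}$ (which is $\sqrt{2}$ times your choice); none of this affects the argument.
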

\begin{proof} 
We first note from Poincar\'{e}'s inequality that 
there is $C_P >0$ such that 
\begin{align*}
 \io (\varphi-\ol{\varphi})^2 \le C_P \io |\na \varphi|^2
 \quad \mbox{for all} \ \varphi \in W^{1,2}(\Omega). 
\end{align*}
We put 
\[
  \delta :=\frac 1{K_5} \sqrt{\frac 1{2C_p}} 
\]
and assume that $\f{M}{(a+K_1M)^k} \chi < \delta$. Then we can choose $K>0$ such that 
\begin{align*}
  K\in \Big[ 2K_5^2\f{M^2}{(a+K_1M)^{2k}} \chi^2, \frac{1}{C_P}\Big). 
\end{align*}
Combination of Lemmata \ref{lem;asy;energy;u} and \ref{lem;asy;energy;v} 
entails that 
\begin{align*}
  \frac d{dt} \left( \io (u-\ol{u_0})^2 + K \io (v-\ol{u_0})^2 \right) 
  &+ \frac{1}{C_P} \io (u-\ol{u_0})^2 
  + 2K \io |\na v|^2 + K \io (v-\ol{u_0})^2 
\\
 &\le   4K_5^2 \f{M^2}{(a+K_1M)^{2k}} \chi^2 \io |\na v|^2 + K \io (u-\ol{u_0})^2
\end{align*}
holds for all $t> t_0$, 
where we utilized that 
\[
 \io (u-\ol{u_0})^2  = \io (u-\ol{u})^2 \le C_P \io |\na u|^2 
 \quad \mbox{for all} \ t>0. 
\]
Then, aided by the definition of $K>0$, 
we have that 
\begin{align*}
 \frac d{dt} \left( \io (u-\ol{u_0})^2 + K \io (v-\ol{u_0})^2 \right) 
 + C_1 \left( \io (u-\ol{u_0})^2 + K \io (v-\ol{u_0})^2 \right) \le 0 
\end{align*}
with $C_1 := \min\{ 1, \frac 1{C_P} - K \}$. 
This means that \eqref{ineq;L2;conv} holds with some $K_6>0$ and 
some $\ell>0$. 
\end{proof}

In fact, drawing on the bounds provided by Lemmata \ref{lem;esti;v} and \ref{lem;esti;fracu}, we can refine the exponential convergence in $L^2(\Omega)$ to an exponential convergence in $L^\infty(\Omega)$. 

\begin{lem}\label{lem;conv;Linfty}
Under the assumptions of Lemma \ref{lem;asy;energy}, 
 there exist $K_7>0$, $\kappa >0$ and $t_\star>0$ such that 
 \begin{align}\label{esti;exponential}
  \lp{\infty}{u-\ol{u_0}}
  + \lp{\infty}{v-\ol{u_0}} 
  \le K_7 e^{-\kappa t} 
  \quad \mbox{for all} \ t> t_\star. 
 \end{align} 
\end{lem}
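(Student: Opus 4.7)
The plan is to combine the exponential $L^2$-convergence given by Lemma~\ref{lem;asy;energy} with eventual-in-time Hölder bounds on $u$ and $v$ that follow from the estimates of Section~\ref{sec;AE}, and then to use a standard interpolation inequality to upgrade the convergence to $L^\infty$.

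First, I would establish that for some $\beta\in(0,1)$ there exist $t_1>t^\ast$ and $C_1>0$ with $\|u\cd\|_{C^{\beta}(\ol{\Omega})}+\|v\cd\|_{C^{\beta}(\ol{\Omega})}\le C_1$ for all $t>t_1$. For $u$, Lemma~\ref{lem;esti;fracu} provides the eventual bound $\lp{\theta}{A^\alpha u\cd}\le 2K_4\io u_0$ with $\theta>n$ and $\alpha>\frac{n}{2\theta}$; since then $2\alpha-\frac{n}{\theta}>0$, the continuous embedding $D(A^\alpha)\hookrightarrow C^{\beta}(\ol{\Omega})$ valid for any $\beta\in\bigl(0,\,2\alpha-\tfrac{n}{\theta}\bigr)$ (cf.\ \cite[Theorem~1.6.1]{Henry_1981}) yields the Hölder bound for $u$. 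For $v$, applying Lemma~\ref{lem;esti;v} with $\mu=\theta>n$ (which is admissible because $\lp{\infty}{u\cd}$ is eventually bounded by Lemma~\ref{lem;esti;fracu}) and some $\lambda>n$ produces an eventual bound on $\|v\cd\|_{W^{1,\lambda}(\Omega)}$, and Morrey's embedding $W^{1,\lambda}(\Omega)\hookrightarrow C^{1-n/\lambda}(\ol{\Omega})$ supplies an analogous Hölder bound for $v$; after decreasing $\beta$ if necessary, we may assume both Hölder bounds hold with the same exponent.

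Second, I would combine these uniform Hölder bounds with the exponential $L^2$-decay from Lemma~\ref{lem;asy;energy} via the elementary interpolation inequality
\[
  \lp{\infty}{\varphi}\le C\,\|\varphi\|_{C^{\beta}(\ol{\Omega})}^{\frac{n}{n+2\beta}}\lp{2}{\varphi}^{\frac{2\beta}{n+2\beta}},
\]
valid for every $\varphi\in C^{\beta}(\ol{\Omega})$. This inequality is obtained by noting that if $|\varphi|$ attains its maximum at some $x_0\in\ol{\Omega}$, Hölder continuity forces $|\varphi|\ge\tfrac12\lp{\infty}{\varphi}$ on $B(x_0,r)\cap\Omega$ with $r$ proportional to $(\lp{\infty}{\varphi}/\|\varphi\|_{C^{\beta}})^{1/\beta}$, whence $\lp{2}{\varphi}^2$ bounds a specific positive power of $\lp{\infty}{\varphi}$ from below. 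Applying the interpolation inequality to $\varphi=u\cd-\ol{u_0}$ and to $\varphi=v\cd-\ol{u_0}$, and invoking $\lp{2}{u\cd-\ol{u_0}}+\lp{2}{v\cd-\ol{u_0}}\le K_6 e^{-\ell t}$ from Lemma~\ref{lem;asy;energy}, directly yields \eqref{esti;exponential} with $\kappa=\ell\cdot\frac{2\beta}{n+2\beta}$ and a suitable $K_7>0$.

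The only delicate point is ensuring that a strictly positive Hölder exponent $\beta$ is available for $u$, and this is guaranteed by the constraint $\alpha>\frac{n}{2\theta}$ already built into Lemma~\ref{lem;esti;fracu}. Apart from that, the argument is a routine consequence of the estimates assembled in Section~\ref{sec;AE}, and I anticipate no further obstacles.
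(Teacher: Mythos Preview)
Your proposal is correct. Both your argument and the paper's proof follow the same overall strategy---combine the exponential $L^2$-decay from Lemma~\ref{lem;asy;energy} with an eventual high-regularity bound and interpolate---but the interpolation tools differ. For $v$, the paper uses the Gagliardo--Nirenberg inequality to interpolate directly between the eventual $W^{1,q}$-bound (Lemmata~\ref{lem;esti;u^p} and~\ref{lem;esti;v}) and $L^2$; for $u$, it stays within the $L^\theta$/fractional-power framework, interpolating $\lp{\theta}{A^{\alpha_0}\varphi}$ between $\lp{\theta}{A^{\alpha}\varphi}$ and $\lp{\theta}{\varphi}$, and then linking $\lp{\theta}{\varphi}$ to $\lp{2}{\varphi}$ via H\"older and the embedding $D(A^{\alpha_0})\hookrightarrow L^\infty$. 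Your route instead passes through H\"older spaces: you use $D(A^\alpha)\hookrightarrow C^\beta(\ol\Omega)$ (for $u$) and Morrey's embedding (for $v$) to obtain uniform $C^\beta$-bounds, and then apply the elementary $L^\infty$--$C^\beta$--$L^2$ interpolation inequality, which you even sketch from scratch. Your approach is slightly more hands-on and has the advantage of treating $u$ and $v$ by the same interpolation step; the paper's approach avoids H\"older spaces altogether and stays within the analytic-semigroup machinery already in play. Both yield the same exponential $L^\infty$-decay with comparable rates.
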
 
\begin{proof}
The proof is based on the arguments in \cite[Proof of Theorem 1.1]{Winkler-Yokota}. 
Let $p \in (\frac n2,n)$ be as in Lemma \ref{lem;esti;u^p} 
and let $q> n $ be such that $q<\frac{np}{n-p}$. 
In light of Lemmata \ref{lem;esti;u^p} and \ref{lem;esti;v}, 
we can find $t_0 > 0$ such that 
\begin{align*}
  \|v\cd \|_{W^{1,q}(\Omega)} \le 2 K_2 K_3 \io u_0 
  \quad \mbox{for all} \ t> t_0. 
\end{align*}
Then the Gagliardo--Nirenberg inequality 
enables us to see that 
\begin{align*}
 \lp{\infty}{v\cd - \ol{u_0}} 
 &\le C_1 \|v\cd -\ol{u_0} \|_{W^{1,q}(\Omega)}^{d_1} 
     \lp{2}{v\cd -\ol{u_0} }^{1-d_1}
\\ 
 &\le C_1\left( 2K_2 K_3 + |\Omega|^{\frac 1q}\ol{u_0} \right)^{d_1} 
 \lp{2}{v\cd -\ol{u_0} }^{1-d_1}
\end{align*}
with some $C_1>0$ and $d_1 := \frac{nq}{2q+nq-2n} \in (0,1)$, 
which with Lemma \ref{lem;asy;energy} 
shows that there is $t_1>t_0$ such that 
\begin{align*}
 \lp{\infty}{v\cd - \ol{u_0}} \le C_2 e^{-(1-d_1)\ell t}
 \quad \mbox{for all}\ t>t_1
\end{align*} 
with $C_2:=C_1 K_6 \big( 2K_2 K_3 + |\Omega|^{\frac 1q}\ol{u_0}\big)^{d_1}$. 
On the other hand, we next verify that 
\begin{align}\label{aim;ineq;conv}
 \lp{\infty}{u\cd - \ol{u_0}} \le 
 C e^{-kt} \quad \mbox{for all} \ t> T. 
\end{align}
with some $C,k,T>0$. 
Let $\theta > n$ and let $\alpha \in (\frac n{2r},1)$. 
Letting $A$ again denote the sectorial realization of $-\Delta + \sigma$ 
in $L^\theta (\Omega)$ under homogeneous Neumann boundary conditions,  from Lemma \ref{lem;esti;fracu} 
we can find $t_2> t_1$ such that 
\begin{align}\label{ineq;fracu;inlastproof}
 \lp{\theta}{A^\alpha u\cd} \le 2 K_4 \io u_0 
 \quad \mbox{for all} \ t> t_2. 
\end{align}
Now we fix $\alpha_0 \in (\frac{n}{2\theta},\alpha)$.
Then the embedding $D(A^{\alpha_0}) \hookrightarrow L^\infty(\Omega)$ enables us to find a constant $C_3 >0$ such that 
\begin{align}
 \lp{\infty}{\varphi}\label{ineq;em} 
 \le
 C_3\lp{\theta}{A^{\alpha_0} \varphi} 
 \quad \mbox{for all} \ \varphi\in D(A^{\alpha_0}) 
\end{align}
holds. 
Now noticing from a standard interpolation inequality  
\[ 
 \lp{\theta}{A^{\alpha_0} \varphi} 
 \le 
 C_4\lp{\theta}{A^\alpha \varphi}^{d_2} 
 \lp{\theta}{\varphi}^{1-d_2} 
 \quad \mbox{for all} \ \varphi\in D(A^\alpha) 
\]
with some $C_4>0$ and $d_2:= \frac{\alpha_0}{\alpha} \in (0,1)$, 
and combination with H\"older's inequality and \eqref{ineq;em} 
\[
 \lp{\theta}{\varphi} \le 
 \lp{\infty}{\varphi}^{\frac{\theta-2}{\theta}}
  \lp{2}{\varphi}^{\frac{2}{\theta}} 
 \le C_3^{\frac{\theta-2}{\theta}}\lp{\theta}{A^{\alpha_0} \varphi}^{\frac{\theta-2}{\theta}}\lp{2}{\varphi}^{\frac{2}{\theta}} 
 \quad \mbox{for all} \ \varphi \in D(A^{\alpha_0}),
\] 
we find that 
\begin{align*}
 \lp{\theta}{A^{\alpha_0}\varphi} 
 \le C_5 \lp{\theta}{A^\alpha \varphi}^{\frac{d_2\theta}{d_2\theta +2(1-d_2)}}\lp{2}{\varphi}^{\frac{2(1-d_2)}{d_2\theta + 2(1-\theta)}}
 \quad \mbox{for all} \ \varphi\in D(A^\alpha) 
\end{align*}
with $C_5:= (C_3^{\frac{(\theta-2)(1-d_2)}{\theta}}C_4)^{\frac{\theta}{d_2\theta + 2(1-d_2)}}$. 
Hence, we establish from \eqref{ineq;em} that 
\begin{align*}
 \lp{\infty}{\varphi} 
 \le C_3 C_5\lp{\theta}{A^\alpha \varphi}^{\frac{d_2\theta}{d_2\theta +2(1-d_2)}}\lp{2}{\varphi}^{\frac{2(1-d_2)}{d_2\theta + 2(1-\theta)}}
 \quad \mbox{for all} \ \varphi\in D(A^\alpha). 
\end{align*}
Applying this to $\varphi := u\cd -\ol{u_0}$, 
we can attain from \eqref{ineq;fracu;inlastproof} and 
Lemma \ref{lem;asy;energy} that there is $t_3 > t_2$ such that 
\begin{align*}
 \lp{\infty}{u\cd -\ol{u_0}} &\le C_3 C_5\left(\lp{\theta}{A^\alpha(u\cd-\ol{u_0})}\right)^{\frac{d_2\theta}{d_2\theta +2(1-d_2)}}
 \lp{2}{u\cd-\ol{u_0}}^{\frac{2(1-d_2)}{d_2\theta + 2(1-\theta)}}\\&\le
 C_3 C_5 \left(2 K_4 \io u_0 +\lp{\theta}{A^\alpha \ol{u_0}}\right)^{\frac{d_2\theta}{d_2\theta +2(1-d_2)}}
 K_6^{\frac{2(1-d_2)}{d_2\theta + 2(1-\theta)}} e^{-{\frac{2(1-d_2)\ell t}{d_2\theta + 2(1-\theta)}}}
\end{align*}
for all $t>t_3$, which concludes the proof. 
\end{proof}

Finally, collecting three of the previous results we can establish Theorem \ref{mainthm}. 

\begin{proof}[{\bf Proof of Theorem \ref{mainthm} and Remark \ref{rem;deltaindependence}}] 
Given $M$, $v_\star$, $a$, $k$ as in the theorem, we let $η$ be as in \eqref{def;eta;GL}, $M_0:=M$, choose $χ_0\in(0,k(a+η)^{k-1}\sqrt{\f{2}n})$ and pick $δ_1:=δ(χ_0,a,k,M_0)$ from Lemma \ref{lem;asy;energy}. We define
\begin{equation}\label{definedelta}
 δ:=\min\bigg\{χ_0, k(a+K_1M_0)^{k-1}\sqrt{\f2n},\f{(a+K_1M)^k}M δ_1\bigg\}.
\end{equation}
Then Lemma \ref{lem;GE} is applicable and guarantees global existence of the solution, and Lemma \ref{lem;conv;Linfty} ensures the convergence statement and estimate \eqref{result:convergenceestimate} on $(t_\star,\infty)$. Due to continuity of the solutions, upon proper choice of the constants, the estimate holds on $(0,\infty)$ as claimed in \eqref{result:convergenceestimate}.\\
If $a>0$, we could instead choose $η=0$ and $M_0:=0$ and in \eqref{definedelta} replace $\f{(a+K_1M)^k}M$ by the positive number $\inf \big\{\f{(a+K_1\mu)^k}{\mu}\mid \mu >0 \big\}$ (and the interval $(0,δ)$ would still be nonempty), thus removing any dependence of $δ$ on $M$ and $v_\star$.
\end{proof}

At this point, also the proof of the second theorem follows easily: 
\begin{proof}[{\bf Proof of Theorem \ref{secondthm}}]
We let $M_1$ be so large that $\eta':= 4 \left(1+\sqrt{1+\f{4 \vstar}{c_0 M_1}}\right)^{-2}\vstar$ (cf. \eqref{def;eta;GL}) satisfies 
\[
 k(a+\eta')^{k-1}\sqrt{\f2n} > χ_0
\]
and that $k(a+K_1M_1)^{k-1}\sqrt{\f2n}>χ_0$. Again taking $δ_1:=δ(χ_0,a,k,M_1)$ from Lemma \ref{lem;asy;energy}, by choosing 
\[
 M_0\ge M_1 \text{ such that } \f{(a+K_1M)^k}{M} > \f{χ_0}{δ_1} \text{ for all } M\ge M_0, 
\]
we can apparently ensure that for $M\ge M_0$ and $χ\in(0,χ_0)$ we have that 
\[
 χ<δ:=\min\bigg\{χ_0, k(a+K_1M_0)^{k-1}\sqrt{\f2n},\f{(a+K_1M)^k}M δ_1\bigg\}
\]
and hence may apply Lemma \ref{lem;GE} and Lemma \ref{lem;conv;Linfty} as in the proof of Theorem \ref{mainthm}. 
\end{proof}

%

\section{Acknowledgement} 
T.B. and J.L. acknowledge support of the {\em Deutsche Forschungsgemeinschaft}  within the project {\em Analysis of chemotactic cross-diffusion in complex frameworks}.
M.M. is funded by JSPS Research 
Fellowships for Young Scientists (No.\ 17J00101) and JSPS Overseas Challenge Program for Young Researchers. 

%


%
{\footnotesize 
\def\cprime{$'$}


}
\end{document}